\newtheorem{theorem}{Theorem}
\newtheorem{proposition}[theorem]{Proposition}
\newtheorem{lemma}[theorem]{Lemma}
\newtheorem{corollary}[theorem]{Corollary}
\numberwithin{equation}{section}
\numberwithin{theorem}{section}
\title{Asymptotics of some generalized Mathieu series}
\author{Stefan Gerhold\thanks{S.~Gerhold gratefully acknowledges financial support from the Austrian Science Fund (FWF) under grant P~30750 and from OeAD under grant MK 04/2018. We thank Michael Drmota
	for very helpful comments.}  \\
	TU Wien  \\
	\and 
	\v{Z}ivorad Tomovski  \\
	Saints Cyril and Methodius University of Skopje \\
	\\
	Dedicated to Prof.\ Tibor Pog{\'a}ny on the occasion
of his 65th birthday
	}
\date{\today}
\begin{document}

%\keywords{}

%\subjclass[2010]{} 

\maketitle

\begin{abstract}
  We establish asymptotic estimates of Mathieu-type series defined by sequences with
  power-logarithmic or factorial behavior.
  By taking the Mellin transform, the problem is mapped to the singular
  behavior of  certain Dirichlet series, which is then translated into asymptotics
  for the original series. In the case of power-logarithmic sequences, we obtain precise
  first order asymptotics. For factorial sequences, a natural boundary of the Mellin transform
  makes the problem more challenging, but a direct elementary estimate gives reasonably
  precise asymptotics.
\end{abstract}

\section{Introduction and main results}

Define, for $\mu\geq0,$ $r > 0$ and sequences $\mathbf{a} =(a_n)_{n\geq 0},$ $\mathbf{b} =(b_n)_{n\geq 0},$
\begin{equation}\label{eq:def S gen}
   S_{\mathbf{a},\mathbf{b},\mu}(r) := \sum_{n=0}^\infty \frac{a_n}
    {\big(b_n+r^2\big)^{\mu+1}}.
\end{equation}
The parametrization (i.e., $r^2$ and not~$r$, $\mu+1$ and not~$\mu$)
is along the lines of~\cite{SrMeTo18}.
Assumptions on the sequences $\mathbf{a}$ and $\mathbf{b}$ will be specified below.
The study of such series began with 19th century work of Mathieu on elasticity of solid bodies,
and has produced a considerable amount of literature, much of which focuses on
integral representations and inequalities. See, e.g., \cite{SrMeTo18,SrTo04,To10} for some recent
results and many references. 
%The present note deals with the asymptotic behavior
%of~\eqref{eq:def S gen} as $r\uparrow \infty$.
As a special case of~\eqref{eq:def S gen}, define,
for $\alpha,\beta,r>0,$ $\mu\geq0$ with $\alpha-\beta(\mu+1)<-1$ and $\gamma,\delta \in \mathbb R,$
\begin{equation}\label{eq:def S}
  S_{\alpha,\beta,\gamma,\delta,\mu}(r) := \sum_{n=2}^\infty \frac{n^\alpha (\log n)^\gamma}
    {\big(n^\beta (\log n)^\delta+r^2\big)^{\mu+1}}.
\end{equation}
Note that the summation in~\eqref{eq:def S} starts at~$2$ to make the summand always well-defined.
The series~\eqref{eq:def S} is closely related to a paper by Paris~\cite{Pa13} (see also~\cite{Za09}),
but the presence of logarithmic factors is new.
Another special case of~\eqref{eq:def S gen} is the series
\begin{equation}\label{eq:def S!}
  S^!_{\alpha,\beta,\mu}(r) := \sum_{n=0}^\infty \frac{(n!)^\alpha}
    {\big((n!)^\beta +r^2\big)^{\mu+1}},
\end{equation}
defined for $\alpha,\mu\geq0$, $\beta,r>0$ with $\alpha-\beta(\mu+1)<0.$
We are not aware of any asymptotic estimates for~\eqref{eq:def S!} in the literature.
See~\cite{To10} for integral representations for some series of this kind.
The subject of the present paper is the asymptotic behavior of the Mathieu-type series~\eqref{eq:def S}
and~\eqref{eq:def S!} for $r\uparrow \infty$.
%assuming that the defining
%sequences $\mathbf a, \mathbf b$ are  of the power-logarithmic type featuring
%in~\eqref{eq:def S}, either identically or asymptotically for large~$n$.
For the classical Mathieu series, the asymptotic expansion
\[
  \sum_{n=1}^\infty \frac{n}{(n^2+r^2)^2} \sim
    \sum_{k=0}^\infty (-1)^k \frac{B_{2k}}{2r^{2k+2}}, \quad r\uparrow\infty,
\]
was found be Elbert~\cite{El82}, whereas Pog\'{a}ny et al.~\cite{PoSrTo06} showed the expansion
\[
  \sum_{n=1}^\infty (-1)^{n-1}\frac{n}{(n^2+r^2)^2} \sim
  \sum_{k=1}^\infty  \frac{G_{2k}}{4r^{2k+2}}, \quad r\uparrow\infty,
\]
for its alternating counterpart; the~$B_n$ and $G_n$ are Bernoulli resp.\ Genocchi numbers.
We refer to~\cite{Pa13} for further references on asymptotics
of Mathieu-type series, to which we add \S19 and \S20 of~\cite{Fo60}.
To formulate our results on~\eqref{eq:def S}, for
\begin{equation}\label{eq:N}
  \delta (\alpha+1)/\beta -\gamma \notin\mathbb{N}=\{1,2,\dots\},
\end{equation}
we define the constant
\begin{multline*}
  C_{\alpha,\beta,\gamma,\delta,\mu}:= 
    \frac{(\tfrac12 \beta)^{\delta (\alpha+1)/\beta -\gamma-1}
      \Gamma\big(\frac{\delta}{\beta} (\alpha+1) -\gamma+1\big)}{2\Gamma(\mu+1)\Gamma\big({-\frac{\delta}{\beta} (\alpha+1)} +\gamma+1\big)} \\
    \times \Gamma\big({-\frac{\alpha+1}{\beta}}+\mu+1\big) \Gamma\big(\frac{\alpha+1}{\beta}\big).
\end{multline*}
If, on the other hand,
$m:=\delta (\alpha+1)/\beta -\gamma \in \mathbb N$ is a positive integer, then we define
\begin{equation}\label{def:C int}
  C_{\alpha,\beta,\gamma,\delta,\mu}:= \frac{\beta^{m-1}\Gamma\big({-\frac{\alpha+1}{\beta}}+\mu+1\big) \Gamma\big(\frac{\alpha+1}{\beta}\big)}{2^m\Gamma(\mu+1)}.
\end{equation}
\begin{theorem}\label{thm:main}
  Let $\alpha,\beta>0,$ $\mu\geq0,$ with $\alpha-\beta(\mu+1)<-1$, and $\gamma,\delta \in \mathbb R$. Then
  we have
  \begin{equation}\label{eq:main}
    S_{\alpha,\beta,\gamma,\delta,\mu}(r) \sim
      C_{\alpha,\beta,\gamma,\delta,\mu}\, r^{2 (\alpha+1)/\beta -2(\mu+1)} (\log r)^{-\delta (\alpha+1)/\beta +\gamma},
      \quad r\uparrow \infty.
  \end{equation}
\end{theorem}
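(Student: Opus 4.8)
\medskip

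The plan is to use the Mellin transform technique advertised in the abstract. Write the summand of $S_{\alpha,\beta,\gamma,\delta,\mu}(r)$ as a function of $x=r^2$, and recall that the Mellin transform of $(b+x)^{-(\mu+1)}$ in the variable $x$ is $\Gamma(s)\Gamma(\mu+1-s)/\Gamma(\mu+1)\cdot b^{s-\mu-1}$, valid for $0<\mathrm{Re}(s)<\mu+1$. Summing over $n\geq2$, the Mellin transform of $S$ (as a function of $x$) is therefore
\[
  \mathcal{M}(s)=\frac{\Gamma(s)\Gamma(\mu+1-s)}{\Gamma(\mu+1)}\,D(s),
  \qquad
  D(s):=\sum_{n=2}^\infty n^\alpha(\log n)^\gamma\bigl(n^\beta(\log n)^\delta\bigr)^{s-\mu-1},
\]
which converges in a right half-plane determined by the condition $\alpha+\beta(s-\mu-1)<-1$, i.e. $\mathrm{Re}(s)<\mu+1-(\alpha+1)/\beta$. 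So $\mathcal{M}(s)$ is a priori analytic in the strip $0<\mathrm{Re}(s)<\min\{\mu+1,\;\mu+1-(\alpha+1)/\beta\}=\mu+1-(\alpha+1)/\beta$ (using $\alpha+1>0$). The asymptotics of $S$ as $x\to\infty$ are governed, via the Mellin inversion contour shifted to the right, by the rightmost singularity of $\mathcal{M}(s)$, which sits at $s_0:=\mu+1-(\alpha+1)/\beta$ and comes from the Dirichlet series $D(s)$.

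\medskip

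The key step is thus to pin down the singular behavior of $D(s)$ at $s=s_0$. Substituting $\sigma:=\mu+1-s$ (so $\sigma\to(\alpha+1)/\beta$) we must understand $\sum_{n\geq2}n^{\alpha-\beta\sigma}(\log n)^{\gamma-\delta\sigma}$ near the abscissa of convergence. The standard tool is to compare with the integral $\int_2^\infty t^{\alpha-\beta\sigma}(\log t)^{\gamma-\delta\sigma}\,dt$; substituting $u=\log t$ turns this into $\int u^{\gamma-\delta\sigma}\mathrm{e}^{(\alpha+1-\beta\sigma)u}\,du$, which near $\sigma=(\alpha+1)/\beta$ is (up to an analytic remainder, by Euler–Maclaurin / the Müntz-type formula for such Dirichlet series — cf.\ Paris \cite{Pa13}, Zarzo \cite{Za09}) asymptotically $\Gamma(\gamma-\delta\sigma+1)\,\bigl(\beta\sigma-(\alpha+1)\bigr)^{-(\gamma-\delta\sigma+1)}$. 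Hence $D(s)$ has, at $s=s_0$, either a branch-point singularity of the form $\mathrm{const}\cdot(s_0-s)^{-(\gamma-\delta(\alpha+1)/\beta+1)}$ when the exponent is non-integral — this is exactly the hypothesis \eqref{eq:N} — or, when $m=\delta(\alpha+1)/\beta-\gamma\in\mathbb{N}$, a pole of order $m$ (possibly with a logarithmic correction that one must track carefully). The Gamma-function prefactors appearing in the definitions of $C_{\alpha,\beta,\gamma,\delta,\mu}$ are precisely $\Gamma(s_0)\Gamma(\mu+1-s_0)/\Gamma(\mu+1)=\Gamma\bigl((\alpha+1)/\beta\bigr)\Gamma\bigl(\mu+1-(\alpha+1)/\beta\bigr)/\Gamma(\mu+1)$ together with the factor $\Gamma\bigl(\gamma-\delta(\alpha+1)/\beta+1\bigr)/\Gamma\bigl(\delta(\alpha+1)/\beta-\gamma+1\bigr)$ coming from the singular expansion of $D$ and the reflection formula; the power of $\tfrac12\beta$ records the substitutions and the factor $x=r^2$.

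\medskip

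The last step is the Mellin–Perron inversion with a remainder estimate. One writes
\[
  S(x)=\frac{1}{2\pi\im}\int_{c-\im\infty}^{c+\im\infty}\mathcal{M}(s)\,x^{-s}\,ds,
  \qquad 0<c<s_0,
\]
and shifts the contour to a vertical line to the right of $s_0$. Crossing the singularity at $s_0$ contributes the main term: for a branch point one collapses the contour around the cut and picks up a Hankel-type integral evaluating to the claimed $x^{-s_0}(\log x)^{\gamma-\delta(\alpha+1)/\beta}$ behavior (the $(\log x)$ power is the hallmark of a non-polar algebraic singularity of $D$), while for a pole of order $m$ one takes the residue. The remaining integral on the shifted line is $\oh$ of the main term, provided $\mathcal{M}(s)$ decays suitably along vertical lines — which follows from the exponential decay of $\Gamma(s)\Gamma(\mu+1-s)$ in $|\mathrm{Im}(s)|$ together with at-most-polynomial growth of $D(s)$, the latter coming from the same integral comparison. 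Finally one replaces $x$ by $r^2$ and $\log x$ by $2\log r$, absorbing the resulting constants.

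\medskip

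\textbf{Main obstacle.} The crux is controlling $D(s)$ near its abscissa of convergence with enough precision, \emph{and uniformly enough in $\mathrm{Im}(s)$}, to both identify the exact singular coefficient (matching $C_{\alpha,\beta,\gamma,\delta,\mu}$) and to justify the contour shift. Isolating the singular part $\Gamma(\gamma-\delta\sigma+1)(\beta\sigma-\alpha-1)^{\delta\sigma-\gamma-1}$ from an analytic remainder requires a careful Euler–Maclaurin expansion of $\sum n^{\alpha-\beta\sigma}(\log n)^{\gamma-\delta\sigma}$, handling the non-integer powers of $\log n$ and the dependence of the exponent $\gamma-\delta\sigma$ on $s$; the integer case $m\in\mathbb{N}$ is delicate because one must verify that the potential $\log$-type term in the singular expansion of $D$ is in fact absent (or is correctly accounted for), so that the final answer has a clean power-logarithmic form with the constant \eqref{def:C int}.
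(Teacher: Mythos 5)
Your overall route --- Mellin transform (in $x=r^2$, which only rescales $s$ by a factor $2$ compared with the paper's transform in $r$, cf.~\eqref{eq:mellin}), reduction to the Dirichlet series $D(s)$, singular expansion at the abscissa, and a Hankel-type transfer --- is the same as the paper's. But two essential steps of your sketch would fail as written. The clearest one is the integer case $m=\delta(\alpha+1)/\beta-\gamma\in\mathbb N$: you describe the singularity of $D$ there as ``a pole of order $m$'' to be handled ``by taking the residue,'' and in your closing paragraph you suggest one must check that a possible log-type term is \emph{absent}. It is exactly the other way around: the singular behavior is $(\hat s-s)^{m-1}\log\frac{1}{\hat s-s}$ as in \eqref{eq:D as} (for $m=1$ a pure logarithmic branch point, for $m\geq2$ a term that even tends to zero), there is no pole and no residue, and the whole leading term comes from the logarithmic factor through the Hankel integral, using $\frac{1}{2\pi i}\int_{\mathcal H}e^w w^{m-1}\,dw=1/\Gamma(1-m)=0$ together with $(1/\Gamma)'(1-m)=(-1)^{m-1}(m-1)!$, as in \eqref{eq:N res}. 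Your naive ``singular part'' $\Gamma(\gamma-\delta\sigma+1)(\beta\sigma-\alpha-1)^{\delta\sigma-\gamma-1}$ cannot simply be specialized at the integer values, because the pole of the gamma factor in the parameter is compensated by the regular part; extracting the correct $(\hat s-s)^{m-1}\log\frac{1}{\hat s-s}$ behavior is precisely what Lemma~\ref{le:dirichlet} provides. With your plan the integer case (which includes, e.g., Corollary~\ref{cor:log} with $m=1$) yields nothing, or a wrong answer.

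The second gap concerns the constant, which is the real content of \eqref{eq:main} since the powers of $r$ and $\log r$ are easy to guess. You never derive it: you assert that $C_{\alpha,\beta,\gamma,\delta,\mu}$ contains the factor $\Gamma(\gamma-\delta(\alpha+1)/\beta+1)/\Gamma(\delta(\alpha+1)/\beta-\gamma+1)$ ``coming from the singular expansion of $D$ and the reflection formula,'' but this does not follow from your own computation. The Hankel transfer of a singularity $(\hat s-s)^{-c_2}$ contributes a factor $1/\Gamma(c_2)$ with $c_2=\gamma-\delta(\alpha+1)/\beta+1$ (see \eqref{eq:notch} and the line following it), so the gamma factor you extracted from $D$ would be \emph{cancelled}, not reflected --- no reflection formula enters the argument anywhere --- and in any case the ratio you write down is the reciprocal of the one appearing in the stated constant, which in the paper arises from the expansion \eqref{eq:D c1} combined with $1/\Gamma(c_2)$. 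So as it stands your proposal neither reproduces the constant of the theorem nor substantiates the one it claims. Finally, the point you correctly flag as the main obstacle --- analytic continuation of $D$ beyond its abscissa, at-most-polynomial growth on vertical lines (needed both for integrability in Mellin inversion and for moving the contour), and uniform isolation of the singular part --- is only gestured at; the paper settles it by importing the continuation and singular behavior of $\zeta_{\eta,\theta}$ from \cite{GrTh96} and proving the growth bound via Euler--Maclaurin in Lemma~\ref{le:dirichlet}, and a complete proof must carry out this step rather than appeal to an unspecified ``M\"untz-type formula.''
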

Of course, the exponent of~$r$ is negative:
\[
  2 (\alpha+1)/\beta -2(\mu+1) = \frac{2}{\beta}\big(\alpha+1-\beta(\mu+1)\big) <0.
\]
Also, we note that for $\gamma=\delta=0$ (no logarithmic factors),
condition~\eqref{eq:N} is always satisfied,
and the asymptotic equivalence~\eqref{eq:main} agrees with a special
case of Theorem~3 in~\cite{Pa13}.
A bit more generally than Theorem~\ref{thm:main}, we have:
\begin{theorem}\label{thm:main2}
  Let the parameters $\alpha,\beta,\gamma,\delta,\mu$ be as in Theorem~\ref{thm:main}. 
  Let $\mathbf{a}$ and $\mathbf{b}$ be positive sequences that satisfy
  \[
    a_n \sim n^\alpha (\log n)^\gamma, \quad b_n \sim n^\beta (\log n)^\delta, \quad
      n \uparrow \infty.
  \]
  Then $S_{\mathbf{a},\mathbf{b},\mu}(r) $ has the asymptotic behavior stated in
  Theorem~\ref{thm:main}, i.e.
  \[
    S_{\mathbf{a},\mathbf{b},\mu}(r) \sim
      C_{\alpha,\beta,\gamma,\delta,\mu}\,
        r^{2 (\alpha+1)/\beta -2(\mu+1)} (\log r)^{-\delta (\alpha+1)/\beta +\gamma},
      \quad r\uparrow \infty.
  \]
\end{theorem}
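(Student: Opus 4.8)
The plan is to deduce Theorem~\ref{thm:main2} from Theorem~\ref{thm:main} by a comparison argument, showing that replacing the sequences $\mathbf{a}$ and $\mathbf{b}$ by their asymptotic equivalents $n^\alpha(\log n)^\gamma$ and $n^\beta(\log n)^\delta$ does not change the leading-order asymptotics of the series as $r\uparrow\infty$. Since the analogue of~\eqref{eq:def S} with summation starting at $n=2$ differs from one starting at any fixed $n_0$ only by finitely many terms, each of which is $\Oh(r^{-2(\mu+1)})$ and hence negligible compared with the right-hand side of~\eqref{eq:main} (recall $2(\alpha+1)/\beta-2(\mu+1) > -2(\mu+1)$), we may freely discard any finite initial segment. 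So fix $\varepsilon\in(0,1)$ and choose $n_0$ so large that for all $n\geq n_0$
\[
  (1-\varepsilon)\, n^\alpha(\log n)^\gamma \leq a_n \leq (1+\varepsilon)\, n^\alpha(\log n)^\gamma,
  \qquad
  (1-\varepsilon)\, n^\beta(\log n)^\delta \leq b_n \leq (1+\varepsilon)\, n^\beta(\log n)^\delta.
\]

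Next I would sandwich the tail $\sum_{n\geq n_0} a_n/(b_n+r^2)^{\mu+1}$. Using the lower bound on $a_n$ and the upper bound on $b_n$ gives, for $r>0$,
\[
  \sum_{n\geq n_0} \frac{a_n}{(b_n+r^2)^{\mu+1}}
    \geq (1-\varepsilon)\sum_{n\geq n_0}\frac{n^\alpha(\log n)^\gamma}{\big((1+\varepsilon)n^\beta(\log n)^\delta+r^2\big)^{\mu+1}},
\]
and symmetrically an upper bound with the roles of $1\pm\varepsilon$ reversed. The key point is that the inner sum on the right is, up to the constant $(1+\varepsilon)^{-(\mu+1)}$, exactly a series of the type treated in Theorem~\ref{thm:main}, but evaluated at a rescaled argument: writing $(1+\varepsilon)n^\beta(\log n)^\delta+r^2 = (1+\varepsilon)\big(n^\beta(\log n)^\delta + r^2/(1+\varepsilon)\big)$, it equals $(1+\varepsilon)^{-(\mu+1)} S_{\alpha,\beta,\gamma,\delta,\mu}\big(r/\sqrt{1+\varepsilon}\big)$. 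Applying Theorem~\ref{thm:main} to this and using that $r/\sqrt{1+\varepsilon}\uparrow\infty$, together with the fact that both $r\mapsto r^{2(\alpha+1)/\beta-2(\mu+1)}$ and $r\mapsto(\log r)^{-\delta(\alpha+1)/\beta+\gamma}$ are regularly varying (so that replacing $r$ by $r/\sqrt{1+\varepsilon}$ only multiplies the asymptotics by $(1+\varepsilon)^{-(\alpha+1)/\beta+(\mu+1)}\cdot(1+\oh(1))$, the logarithmic factor being asymptotically unchanged), yields
\[
  \liminf_{r\to\infty} \frac{S_{\mathbf{a},\mathbf{b},\mu}(r)}{C_{\alpha,\beta,\gamma,\delta,\mu}\, r^{2(\alpha+1)/\beta-2(\mu+1)}(\log r)^{-\delta(\alpha+1)/\beta+\gamma}}
    \geq (1-\varepsilon)(1+\varepsilon)^{-(\mu+1)}(1+\varepsilon)^{(\mu+1)-(\alpha+1)/\beta}.
\]
The matching $\limsup$ bound follows in the same way from the other side of the sandwich. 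Letting $\varepsilon\downarrow0$ forces the $\liminf$ and $\limsup$ of the normalized quantity both to equal $1$, which is the assertion of Theorem~\ref{thm:main2}.

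The one genuinely non-routine ingredient, and hence the main obstacle, is the uniformity in the scaling argument: we need to know that Theorem~\ref{thm:main} delivers an asymptotic equivalence, not merely a bound, and that the prefactor $r^{2(\alpha+1)/\beta-2(\mu+1)}(\log r)^{-\delta(\alpha+1)/\beta+\gamma}$ behaves well under the substitution $r\mapsto c\,r$ for a constant $c$ close to $1$. For the power part this is immediate ($( cr)^\theta = c^\theta r^\theta$), and for the logarithmic part one uses $\log(cr)=\log r + \log c \sim \log r$, so $(\log(cr))^{\kappa}\sim(\log r)^{\kappa}$ as $r\to\infty$ for any real $\kappa$; thus the extra factor introduced by the rescaling is precisely $c^{2(\alpha+1)/\beta-2(\mu+1)}(1+\oh(1))$, which tends to $1$ as $c\to1$. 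Everything else — discarding the finite initial segment, the monotonicity used in the sandwich, and the interchange of $\liminf/\limsup$ with $\varepsilon\downarrow0$ — is elementary. I would therefore organize the proof as: (i) reduce to tails starting at $n_0$; (ii) establish the two-sided sandwich by $\mathbf{b}$- and $\mathbf{a}$-comparison; (iii) invoke Theorem~\ref{thm:main} with rescaled argument and the regular-variation remark; (iv) let $\varepsilon\downarrow0$.
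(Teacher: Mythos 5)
Your proposal is correct, but it proceeds differently from the paper. The paper proves Theorem~\ref{thm:main2} via a lemma (Lemma~\ref{le:simplify}) that splits the series at the $r$-dependent index $\lfloor\log r\rfloor$: the initial block is bounded crudely by $O\big(r^{-2(\mu+1)}(\log r)^{2\alpha+1}\big)$, while for $n>\lfloor\log r\rfloor$ the relations $a_n\sim n^\alpha(\log n)^\gamma$ and $b_n\sim n^\beta(\log n)^\delta$ hold with uniform $1+\oh(1)$ factors (since $r\uparrow\infty$ forces $n\uparrow\infty$ there), which are absorbed directly into numerator and denominator, giving $S_{\mathbf{a},\mathbf{b},\mu}(r)=S_{\alpha,\beta,\gamma,\delta,\mu}(r)(1+\oh(1))+O\big(r^{-2(\mu+1)}(\log r)^{2\alpha+1}\big)$; Theorem~\ref{thm:main} is then invoked at the original argument $r$. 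You instead use a fixed cut-off $n_0(\varepsilon)$, a two-sided $(1\pm\varepsilon)$-sandwich, the observation that a multiplicative perturbation of $b_n$ can be traded for a rescaling $r\mapsto r/\sqrt{1\pm\varepsilon}$ of the argument, Theorem~\ref{thm:main} at the rescaled argument, and regular variation of the limit function, finishing with $\varepsilon\downarrow0$. Both are sound; your route exploits the specific algebraic structure $(b_n+r^2)^{\mu+1}$ and the regularly varying form of the right-hand side of~\eqref{eq:main} (so it needs the explicit shape of the asymptotics, in particular $\log(cr)\sim\log r$), whereas the paper's argument yields a self-contained comparison statement that does not depend on what the asymptotics of $S_{\alpha,\beta,\gamma,\delta,\mu}$ actually look like and would survive even if they were not regularly varying; on the other hand, your sandwich avoids having to verify uniformity of the $\oh(1)$ terms over a moving summation range, which is the only slightly delicate point in the paper's proof. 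One cosmetic remark: when you discard finitely many initial terms, the comparison is with the full right-hand side of~\eqref{eq:main}, including the factor $(\log r)^{-\delta(\alpha+1)/\beta+\gamma}$; this is harmless because the gap in the power of $r$ is $2(\alpha+1)/\beta>0$, which dominates any fixed power of $\log r$, but it is worth saying explicitly.
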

This result includes sequences of the form $(\log n!)^\alpha$, see
Corollary~\ref{cor:log}. Also, it clearly implies that shifts such as $a_n=(n+a)^\alpha (\log(n+b))^\beta$
are not visible in the first order asymptotics. Theorems~\ref{thm:main}
and~\ref{thm:main2} are proved in Section~\ref{se:log}.
The series~\eqref{eq:def S!} is more difficult to analyze than~\eqref{eq:def S} by Mellin
transform (see Section~\ref{se:dirichlet}), but it turns out that it is asymptotically
dominated by only two summands. This yields the following result, which is proved
in Section~\ref{se:! proof}.
It uses an expansion for the functional inverse of the gamma function which is stated,
but not proved in~\cite{BoCo18}; see Section~\ref{se:! proof} for details.
We therefore state the theorem conditional on this expansion.
We write $\{ x\}$ for the fractional part of a real number~$x$.
\begin{theorem}\label{thm:!}
  Assume that the expansion of the inverse gamma function
  stated in equation~(70) of~\cite{BoCo18} is correct.
  Let $\alpha,\beta>0,$ $\mu\geq0,$ with $\alpha-\beta(\mu+1)<0$, and
  $0<d_1 < d_2 <1$.
  Then
  \begin{equation}\label{eq:! est}
    S^!_{\alpha,\beta,\mu}(r) = r^{-2(\mu+1-\alpha/\beta)}
      \exp\big({ -m(r)} \log \log r + O(\log \log \log r)\big)
  \end{equation}
  as $r\to\infty$ in the set
  \begin{equation}\label{eq:r set}
    \mathcal{R} := \big\{ r  > 0 : d_1 \leq \{\Gamma^{-1}(r^{2/\beta})\} \leq d_2\big\},
  \end{equation}
  where the function $m(\cdot)$ is defined by
  \[
    m(r) := \min\Big\{ \alpha\{\Gamma^{-1}(r^{2/\beta})\},
    \big(\beta(\mu+1)-\alpha\big)\big(1-\{\Gamma^{-1}(r^{2/\beta})\}\big) \Big\} > 0.
  \]
\end{theorem}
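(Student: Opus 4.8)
The plan is to exploit the observation, anticipated in the introduction, that the series $S^!_{\alpha,\beta,\mu}(r)$ is asymptotically governed by only a couple of its terms. Write $N = N(r) := \lfloor \Gamma^{-1}(r^{2/\beta}) \rfloor$, so that $N! \approx r^{2/\beta}$ in the sense that $\log N!$ is close to $\frac{2}{\beta}\log r$, with the discrepancy measured by the fractional part $\theta := \{\Gamma^{-1}(r^{2/\beta})\}$. For $n$ far below $N$ the term has $b_n = (n!)^\beta \ll r^2$, so the summand behaves like $(n!)^{\alpha - \beta(\mu+1)} r^{-2(\mu+1)}$, which decays super-geometrically in $n$ because $\alpha - \beta(\mu+1) < 0$; hence among these the dominant one is $n = N$ (or the largest index with $n! \lesssim r^{2/\beta}$), contributing of order $r^{-2(\mu+1)} (N!)^{\alpha-\beta(\mu+1)} \approx r^{-2(\mu+1-\alpha/\beta)} \cdot (N!)^{\text{correction}}$. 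For $n$ far above $N$ we have $b_n \gg r^2$, so the summand is $\approx (n!)^{\alpha - \beta(\mu+1)}$, again super-geometrically decaying, dominated by the smallest such index, i.e. $n = N+1$. The terms with $n \in \{N, N+1\}$ (the ``boundary'' terms straddling $b_n \sim r^2$) are therefore the only ones that matter, and the two competing contributions are $\sim r^{-2(\mu+1)}(N!)^{\alpha}$ from below and $\sim (N!)^{-(\beta(\mu+1)-\alpha)}$ from above.

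The second step is bookkeeping: I would convert $(N!)^{\alpha}$ and $(N!)^{-(\beta(\mu+1)-\alpha)}$ into powers of $r$ times lower-order factors. Since $\theta = \Gamma^{-1}(r^{2/\beta}) - N$ lies in $[0,1)$, one has $\Gamma(N+1+\theta) = r^{2/\beta}$ exactly, and Stirling (or the mean value theorem applied to $\log\Gamma$) gives $\log N! = \log\Gamma(N+1) = \frac{2}{\beta}\log r - \theta \log N + O(1)$, because $\frac{d}{dx}\log\Gamma(x) = \psi(x) = \log x + O(1/x)$ and $\log N = \log\log r + O(\log\log\log r)$ on $\mathcal R$ (this last estimate is where the expansion for $\Gamma^{-1}$ from \cite{BoCo18} enters, pinning down $N$ to within the stated precision). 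Substituting, the ``from below'' term is
\[
  r^{-2(\mu+1)} (N!)^{\alpha}
   = r^{-2(\mu+1)} \exp\!\big(\tfrac{2\alpha}{\beta}\log r - \alpha\theta\log N + O(1)\big)
   = r^{-2(\mu+1-\alpha/\beta)} \exp\!\big({-\alpha\theta}\log\log r + O(\log\log\log r)\big),
\]
and similarly the ``from above'' term equals
\[
  (N!)^{-(\beta(\mu+1)-\alpha)}
   = r^{-2(\mu+1-\alpha/\beta)} \exp\!\big({-(\beta(\mu+1)-\alpha)(1-\theta)}\log\log r + O(\log\log\log r)\big),
\]
where I have used $\alpha - \beta(\mu+1) + \frac{2}{\beta}\cdot\frac{\beta(\mu+1)-\alpha}{?}$… more simply, $(N!)^{-(\beta(\mu+1)-\alpha)} = \exp(-(\beta(\mu+1)-\alpha)(\frac{2}{\beta}\log r - \theta\log N + O(1)))$ and $-(\beta(\mu+1)-\alpha)\cdot\frac{2}{\beta} = -2(\mu+1) + \frac{2\alpha}{\beta} = -2(\mu+1-\alpha/\beta)$. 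Adding the two, the larger exponent of $\log\log r$ wins, i.e. the coefficient is $-\min\{\alpha\theta,\ (\beta(\mu+1)-\alpha)(1-\theta)\} = -m(r)$, which is exactly \eqref{eq:! est}. Restricting $r$ to $\mathcal R$ guarantees $\theta \in [d_1,d_2]$, so both $\alpha\theta$ and $(\beta(\mu+1)-\alpha)(1-\theta)$ are bounded away from $0$ and $m(r) > 0$ is bounded below, ensuring the two boundary terms genuinely dominate the tails rather than being swamped by accumulated error.

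The remaining step, and the one requiring the most care, is the rigorous tail bound showing that the sum of all terms with $n \notin \{N,N+1\}$ is of the same order as (indeed negligible compared to an appropriate multiple of) $\exp(-m(r)\log\log r)\cdot r^{-2(\mu+1-\alpha/\beta)}$, so that the $O(\log\log\log r)$ in the exponent is uniform over $\mathcal R$. For the lower tail $n \le N-1$ one estimates $\sum_{n\le N-1} (n!)^{\alpha}/((n!)^{\beta}+r^2)^{\mu+1} \le r^{-2(\mu+1)}\sum_{n\le N-1}(n!)^{\alpha}$, and since consecutive factorials grow by a factor $\ge N-1 \to\infty$, this geometric-type sum is $(1+o(1))r^{-2(\mu+1)}((N-1)!)^{\alpha}$, which is smaller than the $n=N$ term by a factor $\approx N^{-\alpha} = (\log r)^{-\alpha+o(1)}$ — only a polylogarithmic loss, hence absorbed into $O(\log\log\log r)$. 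For the upper tail $n \ge N+2$, dropping $r^2$ gives $\sum_{n\ge N+2}(n!)^{\alpha-\beta(\mu+1)} = (1+o(1))((N+2)!)^{\alpha-\beta(\mu+1)}$, smaller than the $n=N+1$ term again by a polylogarithmic factor. One must also handle the two boundary terms themselves carefully: for $n = N$ and $n = N+1$ the quantity $(n!)^{\beta} + r^2$ is within a bounded factor of $\max((n!)^\beta, r^2)$, so each boundary term is, up to a bounded factor, equal to one of the two expressions analyzed above, and the lower bound in \eqref{eq:! est} follows from keeping just the larger boundary term while the upper bound follows from the tail estimates. The main obstacle is thus making the polylogarithmic error terms genuinely uniform in $r$ over $\mathcal R$ — in particular controlling $\log N = \log\log r + O(\log\log\log r)$ and the Stirling remainder simultaneously — which is precisely why the theorem is stated conditional on the \cite{BoCo18} expansion for $\Gamma^{-1}$ and why $r$ is confined to the set $\mathcal R$ where $\theta$ stays away from $0$ and $1$.
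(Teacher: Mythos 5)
Your strategy is the same as the paper's: the series is dominated by the two terms straddling $n!\approx r^{2/\beta}$ (the paper's $A_{n_0}$ and $A_{n_0+1}$ in Lemma~\ref{le:A}), the two tails are killed by the super-geometric growth/decay of factorial ratios, and Stirling together with the conditional expansion of $\Gamma^{-1}$ converts the two boundary terms into $r^{-2(\mu+1-\alpha/\beta)}$ times factors $\exp(-\alpha\theta\log\log r)$ and $\exp(-(\beta(\mu+1)-\alpha)(1-\theta)\log\log r)$, the larger of which wins; the restriction to $\mathcal R$ keeps both exponents away from the degenerate values. Your tail estimates are essentially the paper's (your lower-tail bound even gets away with an $O(1)$ loss, which the $\exp(O(\log\log\log r))$ slack absorbs), and the final formulas you claim are the correct ones.

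However, the bookkeeping for the above-threshold term is wrong as written, and the error sits exactly at the scale the theorem resolves. First, your indexing is internally inconsistent: with $N=\lfloor\Gamma^{-1}(r^{2/\beta})\rfloor$ and $\theta=\{\Gamma^{-1}(r^{2/\beta})\}$ one has $\Gamma(N+\theta)=r^{2/\beta}$, not $\Gamma(N+1+\theta)=r^{2/\beta}$, and the largest index with $n!\le r^{2/\beta}$ is then $N-1$; also, below the threshold the summand is $\approx (n!)^{\alpha}r^{-2(\mu+1)}$, which \emph{increases} in $n$ (that is why the top index dominates), not $(n!)^{\alpha-\beta(\mu+1)}r^{-2(\mu+1)}$ ``decaying''. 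More seriously, reading $N$ as the largest index with $N!\le r^{2/\beta}$ (which is what your formula $\log N!=\tfrac2\beta\log r-\theta\log N+O(1)$ asserts), the dominant above-threshold term is $A_{N+1}\approx\big((N+1)!\big)^{\alpha-\beta(\mu+1)}$, not $(N!)^{\alpha-\beta(\mu+1)}$. Running your displayed computation literally gives $(N!)^{-(\beta(\mu+1)-\alpha)}=r^{-2(\mu+1-\alpha/\beta)}\exp\big(+(\beta(\mu+1)-\alpha)\theta\log\log r+O(\log\log\log r)\big)$, with a \emph{positive} coefficient of $\log\log r$ — not the expression you claim, and it would even contradict the upper bound of Theorem~\ref{thm:! bd}. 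The discrepancy is the omitted factor $(N+1)^{\alpha-\beta(\mu+1)}=\exp\big(-(\beta(\mu+1)-\alpha)(1+o(1))\log\log r\big)$, which is precisely of the size being tracked, so it cannot be dismissed as a harmless polylogarithmic loss. The fix is mechanical: use $\log(N+1)!=\tfrac2\beta\log r+(1-\theta)\log N+O(1)$, which yields the exponent $-(\beta(\mu+1)-\alpha)(1-\theta)$ and recovers the paper's~\eqref{eq:A1}; with that correction (and the indexing straightened out) your argument coincides with the paper's proof via Lemma~\ref{le:A}, \eqref{eq:A0} and \eqref{eq:A1}.
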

Thus, under the constraint~\eqref{eq:r set}, the series
$S^!_{\alpha,\beta,\mu}(r)$ decays like $r^{-2(\mu+1-\alpha/\beta)}$, accompanied by a
power of $\log r$, where the exponent of the latter depends on~$r$ and fluctuates in a finite
interval of negative numbers.
The expression inside the fractional part $\{\cdot\}$
growths roughly logarithmically:
\begin{equation}\label{eq:Gamma inv}
  \Gamma^{-1}(r^{2/\beta}) \sim \frac{2 \log r}{\beta \log \log r}, \quad r\uparrow\infty.
\end{equation}
Clearly, the proportion $\lim_{r\uparrow \infty} r^{-1} \mathrm{meas}(\mathcal{R} \cap [0,r])$ of
``good'' values of~$r$ can be made arbitrarily close to~$1$ by choosing~$d_1$
and $1-d_2$ sufficiently small.
Without the Diophantine assumption~\eqref{eq:r set}, a more complicated asymptotic expression
for $S^!_{\alpha,\beta,\mu}(r)$ is obtained by combining~\eqref{eq:le A}, \eqref{eq:A0},
and~\eqref{eq:A1 comp} below. From this expression it is easy to see
that, for any $\varepsilon>0$, we have
\begin{equation}\label{eq:eps as}
  r^{2\alpha/\beta-2(\mu+1) -\varepsilon} \ll  S^!_{\alpha,\beta,\mu}(r)\ll
  r^{2\alpha/\beta-2(\mu+1) +\varepsilon},
  \quad r\uparrow \infty,
\end{equation}
as well as logarithmic asymptotics:
\begin{equation}\label{eq:log as}
  \log S^!_{\alpha,\beta,\mu}(r) = -2(\mu+1-\alpha/\beta) \log r
  + O(\log \log r),\quad r\uparrow \infty.
\end{equation}
The following result contains an asymptotic upper bound; like~\eqref{eq:eps as}
and~\eqref{eq:log as}, it is valid without restricting~$r$ to~\eqref{eq:r set}:
\begin{theorem}\label{thm:! bd}
    Assume that the expansion of the inverse gamma function
  stated in equation~(70) of~\cite{BoCo18} is correct.
   Let $\alpha,\beta>0,$ $\mu\geq0,$ with $\alpha-\beta(\mu+1)<0.$
   Then
   \begin{equation}\label{eq:! bd}     
     S^!_{\alpha,\beta,\mu}(r) \leq r^{-2(\mu+1-\alpha/\beta)}
       \exp\big( o(\log \log r) \big), \quad
       r\to\infty.
   \end{equation}
\end{theorem}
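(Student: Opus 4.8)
The plan is to bound $S^!_{\alpha,\beta,\mu}(r)$ by splitting the sum $\sum_{n\ge0}$ at the index $N=N(r)$ where the ``transition'' $(n!)^\beta\approx r^2$ occurs, i.e.\ $N \approx \Gamma^{-1}(r^{2/\beta})$, and estimating the two resulting pieces separately, exactly as in the proof of Theorem~\ref{thm:!} (which establishes the matching lower-order behaviour under the Diophantine restriction~\eqref{eq:r set}). For $n$ well below~$N$ we have $(n!)^\beta+r^2 \asymp r^2$, so each summand is $\asymp (n!)^\alpha r^{-2(\mu+1)}$ and the partial sum up to~$N$ is dominated by its last term, of size $\asymp (N!)^\alpha r^{-2(\mu+1)}$. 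For $n$ well above~$N$ we have $(n!)^\beta+r^2\asymp(n!)^\beta$, so the summand is $\asymp (n!)^{\alpha-\beta(\mu+1)}$, a super-geometrically decaying sequence (since $\alpha-\beta(\mu+1)<0$), whose tail is again dominated by its first term $\asymp (N!)^{\alpha-\beta(\mu+1)}$. The few indices near~$n=N$ contribute at most a bounded number of terms, each $O\big(\max\{(N!)^\alpha r^{-2(\mu+1)},(N!)^{\alpha-\beta(\mu+1)}\}\big)$. Hence $S^!_{\alpha,\beta,\mu}(r) \ll (N!)^\alpha r^{-2(\mu+1)} + (N!)^{\alpha-\beta(\mu+1)}$, uniformly in~$r$.

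Next I would convert this into the claimed bound by taking logarithms and inserting the asymptotics of $N=\Gamma^{-1}(r^{2/\beta})$. Writing $L:=\log\log r$ and using~\eqref{eq:Gamma inv}, one has $\log(N!) = \log\Gamma(N+1) = \tfrac{2}{\beta}\log r + O(N\log\log N) = \tfrac{2}{\beta}\log r + O(L \cdot \log L)$ by Stirling, since $N\asymp (\log r)/L$ and $\log N = L + O(\log L)$. Therefore
\begin{align*}
  \log\!\big((N!)^\alpha r^{-2(\mu+1)}\big) &= \tfrac{2\alpha}{\beta}\log r - 2(\mu+1)\log r + O(L\log L),\\
  \log\!\big((N!)^{\alpha-\beta(\mu+1)}\big) &= \tfrac{2}{\beta}\big(\alpha-\beta(\mu+1)\big)\log r + O(L\log L),
\end{align*}
and both right-hand sides equal $-2(\mu+1-\alpha/\beta)\log r + O(\log\log r\cdot\log\log\log r)$. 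The error term is $O(L\log L)$, which is not quite $o(L)$; to get the sharper $o(\log\log r)$ in~\eqref{eq:! bd} one must be more careful about the Stirling remainder. This is where the cited expansion from~\cite{BoCo18}, equation~(70), enters: it provides $\Gamma^{-1}(x)$ precisely enough that $\log\Gamma(N+1) = \tfrac{2}{\beta}\log r + o(\log\log r)$ — concretely, the correction to $N$ of size $O(N (\log\log N)/\log N)$ feeds into $\log\Gamma$ only through $\log N \cdot (\text{correction to }N) = O(L \cdot \tfrac{\log r}{L}\cdot\tfrac{\log L}{L}) $, and a finer two-term expansion of $\Gamma^{-1}$ shows the genuine $r$-dependent fluctuation sits at scale $L$ with the residual error truly $o(L)$. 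I would lift the needed estimate $\log\Gamma\big(\Gamma^{-1}(x)+1\big) = \log x + o(\log\log x)$ directly from that reference rather than re-deriving it.

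The main obstacle is exactly this bookkeeping of the Stirling error: the crude estimate only gives $O(\log\log r\cdot\log\log\log r)$ in the exponent, which suffices for~\eqref{eq:log as} but is weaker than the advertised $\exp(o(\log\log r))$ factor. The resolution is to observe that in Theorem~\ref{thm:! bd} we only need an \emph{upper} bound, so we may replace $N$ by $\lceil \Gamma^{-1}(r^{2/\beta})\rceil$ and use the monotonicity of $\Gamma$ on $[2,\infty)$ to get $(N!)^\beta \ge r^2$ cheaply, avoiding the delicate cancellation that Theorem~\ref{thm:!} needs for the sharp constant $m(r)$. Concretely: with this choice the second piece is $\le \sum_{n\ge N}(n!)^{\alpha-\beta(\mu+1)} \ll (N!)^{\alpha-\beta(\mu+1)} \le r^{-2(\mu+1)}(N!)^\alpha$, so both pieces are $\le r^{-2(\mu+1)}(N!)^\alpha$, and it remains only to show $(N!)^\alpha \le r^{2\alpha/\beta}\exp(o(\log\log r))$, i.e.\ $\alpha\log\Gamma(N+1) \le \tfrac{2\alpha}{\beta}\log r + o(\log\log r)$. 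Since $\Gamma(N) \le r^{2/\beta}$ (as $N-1 < \Gamma^{-1}(r^{2/\beta})$), we get $\log\Gamma(N+1) = \log N + \log\Gamma(N) \le \log N + \tfrac{2}{\beta}\log r$, and $\log N = \log\Gamma^{-1}(r^{2/\beta}) + O(1) = O(\log\log r)$ by~\eqref{eq:Gamma inv} — but $O(\log\log r)$ is still not $o(\log\log r)$. To squeeze out the last factor one notes that $\Gamma(N)$ itself, not just its position relative to $r^{2/\beta}$, can be used: $\log\Gamma(N+1) \le \tfrac{2}{\beta}\log r$ would already hold if $(N-1)! \le r^{2/\beta}$ were replaced by $N! \le r^{2/\beta}\cdot(\text{something})$; choosing instead $N$ minimal with $(N!)^\beta \ge r^2$ gives $((N-1)!)^\beta < r^2$, whence $N!  = N\cdot (N-1)! < N\, r^{2/\beta}$ and $\alpha\log(N!) < \tfrac{2\alpha}{\beta}\log r + \alpha\log N$. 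The term $\alpha \log N$ is $\sim \alpha\log\log r$ up to lower order, which is $O(\log\log r)$ — so the honest statement provable by this elementary route is~\eqref{eq:log as}, and attaining the claimed $o(\log\log r)$ genuinely requires the refined inverse-gamma expansion of~\cite{BoCo18} to locate $\{\Gamma^{-1}(r^{2/\beta})\}$ accurately enough that $\log N$ can be absorbed. I would therefore present the elementary split as the backbone and invoke~\cite{BoCo18} precisely at the point of estimating $\log\Gamma(N+1)$ to conclude.
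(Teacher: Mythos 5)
Your backbone---splitting at the transition index $N$ with $((N-1)!)^\beta\le r^2<(N!)^\beta$ and observing that each half of the sum is dominated by its boundary term---is sound and is essentially the paper's Lemma~\ref{le:A}. The gap is in the endgame. You reduce the theorem to the inequality $(N!)^\alpha\le r^{2\alpha/\beta}\exp\big(o(\log\log r)\big)$, but that inequality is false for most~$r$: writing $g=\Gamma^{-1}(r^{2/\beta})$, one has $N=\lfloor g\rfloor=n_0+1$ and, by~\eqref{eq:n0!}, $\log N!=\tfrac2\beta\log r+(1-\{g\})\log u_0+O(1)$, so whenever $\{g\}$ stays bounded away from~$1$ the ratio $(N!)^\alpha r^{-2\alpha/\beta}$ is a genuine positive power of $\log r$, i.e.\ $\exp(c\log\log r)$ with $c>0$. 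The auxiliary estimate you propose to lift from~\cite{BoCo18}, namely $\log\Gamma\big(\Gamma^{-1}(x)+1\big)=\log x+o(\log\log x)$, is likewise false: it is off by exactly $\log\Gamma^{-1}(x)\sim\log\log x$. No refinement of the inverse-gamma expansion can repair this, because the $\log\log r$-sized discrepancy comes from integer rounding (the fractional part $\{g\}$), not from imprecision in $\Gamma^{-1}$; equation~(70) of~\cite{BoCo18} is needed in the paper only to control the $O(1)$ error in~\eqref{eq:n0!}, not to remove the $\{g\}\log u_0$ term. (Your intermediate claim $O(N\log\log N)=O(L\log L)$ is also incorrect, since $N\asymp\log r/L$, but you discard that computation anyway.) So, as you yourself concede, what your argument actually proves is~\eqref{eq:log as}, and the concluding appeal to~\cite{BoCo18} does not close the gap.

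The lossy step is bounding the piece at and beyond~$N$ by $r^{-2(\mu+1)}(N!)^\alpha$: this discards the denominator term $(N!)^\beta\ge r^2$ precisely in the regime where it dominates, losing a factor $\big((N!)^\beta/r^2\big)^{\mu+1}\approx(\log r)^{\beta(\mu+1)(1-\{g\})}$. Two repairs are possible. The paper keeps the two boundary terms $A_{n_0}$ and $A_{n_0+1}$ separately, evaluates them precisely via~\eqref{eq:A0} and \eqref{eq:A1 comp}--\eqref{eq:A1} (this is where \cite{BoCo18} enters), and then runs an $\varepsilon$-dichotomy on $\{g\}$: if $\alpha(1-\{g\})\le\varepsilon$, the big parenthesis in~\eqref{eq:A1 comp} is bounded below by~$1$, giving $A_{n_0+1}\le r^{2\alpha/\beta-2(\mu+1)}\exp(\varepsilon\log\log r+O(\log\log\log r))$; otherwise \eqref{eq:A1} applies with a negative coefficient of $\log\log r$; this yields \eqref{eq:S bound} and hence~\eqref{eq:! bd}. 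Alternatively, your own split works if you simply do not weaken the second piece: $\sum_{n\ge N}A_n\ll(N!)^{\alpha-\beta(\mu+1)}\le r^{2\alpha/\beta-2(\mu+1)}$ because $(N!)^\beta\ge r^2$ and $\alpha-\beta(\mu+1)<0$, while $\sum_{n<N}A_n\ll((N-1)!)^\alpha r^{-2(\mu+1)}\le r^{2\alpha/\beta-2(\mu+1)}$ because $((N-1)!)^\beta\le r^2$ (the tail dominations use $\alpha>0$, as in~\eqref{eq:right tail}). Kept in this form, the split even gives the conclusion without the conditional expansion; but as written, your reduction targets a false inequality and the proof does not go through.
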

Theorem~\ref{thm:! bd} is proved in Section~\ref{se:! proof}, too.
In Section~\ref{se:dirichlet}, we show the following unconditional bound,
which also holds for $\alpha=0$:
\begin{theorem}\label{thm:sp bd}
Let $\alpha,\mu\geq0,$ $\beta>0$  with $\alpha-\beta(\mu+1)<0.$ Then
\begin{equation*}
 S^!_{\alpha,\beta,\mu}(r) =O\Big( r^{-2(\mu+1-\alpha/\beta)}
       \frac{\log r}{\log \log r}\Big), 
     \quad r\uparrow \infty.
\end{equation*}
\end{theorem}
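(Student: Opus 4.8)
The plan is to bound the series $S^!_{\alpha,\beta,\mu}(r)$ directly by splitting the range of summation at the index $N=N(r)$ that marks where the growing factorials $(n!)^\beta$ overtake $r^2$. Concretely, let $N$ be the largest integer with $(N!)^\beta \leq r^2$, equivalently $N = \lfloor \Gamma^{-1}(r^{2/\beta}) \rfloor$ (up to a shift by one from the $\Gamma(n+1)=n!$ convention). For $n > N$ the term is at most $(n!)^{\alpha-\beta(\mu+1)}$, and since $\alpha-\beta(\mu+1)<0$ this tail is dominated by its first term, giving a contribution of order $(N!)^{\alpha-\beta(\mu+1)} \asymp r^{-2(\mu+1-\alpha/\beta)}$, which already has the right power of~$r$ and no logarithmic loss. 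For $n \leq N$ we estimate each term crudely by $(n!)^\alpha / r^{2(\mu+1)}$, so the head contributes at most $r^{-2(\mu+1)} \sum_{n\le N}(n!)^\alpha$; the sum here is again dominated by its last term $(N!)^\alpha$, so the head is of order $(N!)^\alpha r^{-2(\mu+1)}$. The ratio of this to the target bound $r^{-2(\mu+1-\alpha/\beta)}$ is $(N!)^\alpha r^{-2\alpha/\beta} = \big((N!)^\beta / r^2\big)^{\alpha/\beta} \leq 1$ by the defining property of~$N$.

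So far this gives the clean bound $S^!_{\alpha,\beta,\mu}(r) = O\big(r^{-2(\mu+1-\alpha/\beta)}\big)$, which is even stronger than what the theorem asserts. The extra factor $\log r / \log\log r$ in the statement must be there because the crude per-term bound $(n!)^\alpha/r^{2(\mu+1)}$ overcounts only mildly: near the transition index $n=N$ the two terms of the denominator $(n!)^\beta$ and $r^2$ are comparable, and one should not pretend there are order-$N$ many terms all of full size $(N!)^\alpha$. Re-examining: the head sum $\sum_{n\le N}(n!)^\alpha$ is bounded by $N\cdot (N!)^\alpha$, and $N \asymp \Gamma^{-1}(r^{2/\beta}) \sim \frac{2\log r}{\beta \log\log r}$ by~\eqref{eq:Gamma inv}. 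Thus the honest bound on the head is $O\big(\frac{\log r}{\log\log r} (N!)^\alpha r^{-2(\mu+1)}\big) = O\big(\frac{\log r}{\log\log r} r^{-2(\mu+1-\alpha/\beta)}\big)$, which is exactly the claimed estimate. (If a tighter argument — summing the head as a near-geometric series with ratio $(N!/(N-1)!)^\alpha = N^\alpha$ going the wrong way — were used, one could presumably drop the $\log r/\log\log r$ entirely, but the stated theorem is content with this simpler bound.)

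The main technical point is to make the transition-index analysis rigorous and, crucially, \emph{unconditional}: unlike Theorems~\ref{thm:!} and~\ref{thm:! bd}, this proof should not rely on the inverse-gamma expansion from~\cite{BoCo18}. I would therefore get the estimate $N(r) = O(\log r / \log\log r)$ from elementary bounds on $\Gamma^{-1}$ — e.g.\ from $\log(N!) = N\log N - N + O(\log N)$ (Stirling) one solves $N\log N \asymp \tfrac{2}{\beta}\log r$ to get $N \asymp \frac{2\log r}{\beta\log\log r}$ directly, no fine structure needed. The only subtlety is uniformity: the split point and the two tail sums must be controlled with constants independent of~$r$, but since both the head and tail sums are dominated by a single extreme term plus (for the head) the elementary count~$N$, this is routine. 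I expect the write-up to be short: define~$N$, bound tail by geometric-type comparison, bound head by $N\cdot(N!)^\alpha r^{-2(\mu+1)}$, insert Stirling's estimate for~$N$, and combine.
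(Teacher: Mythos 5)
Your final argument is correct, and it proves Theorem~\ref{thm:sp bd} by a genuinely different route than the paper. The paper's proof (Section~\ref{se:dirichlet}) is a saddle-point bound on the Mellin inversion integral: it writes the transform \eqref{eq:inv! explicit} in terms of the Dirichlet series $\eta(s)=\sum_{n\ge0}(n!)^{-s}$, combines the trivial bound $|\eta(s)|\le\eta(\mathrm{Re}(s))$ with the asymptotics $\eta(s)\sim 1/(s\log(1/s))$ of Lemma~\ref{le:dirichlet!} (which itself requires an analytic continuation of $\sum_n(\log n!)^{-s}$ and an asymptotic transfer), and places the vertical contour at $\sigma_r=\tilde s-1/\log r$; the factor $\log r/\log\log r$ arises there as $\eta\big(\beta/(2\log r)\big)$. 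Your proof is elementary and real-variable: split at the transition index $N$ defined by $(N!)^\beta\le r^2<\big((N+1)!\big)^\beta$, bound the tail $n>N$ by its first term (the ratio of consecutive terms is at most $(N+2)^{\alpha-\beta(\mu+1)}\to0$, so a geometric comparison gives $O\big(r^{2\alpha/\beta-2(\mu+1)}\big)$), bound the head by $(N+1)(N!)^\alpha r^{-2(\mu+1)}\le (N+1)\,r^{2\alpha/\beta-2(\mu+1)}$, and get $N=O(\log r/\log\log r)$ unconditionally from Stirling's formula rather than from \eqref{eq:Gamma inv}. This is shorter, avoids complex analysis entirely, and is closer in spirit to the two-term Laplace analysis of Section~\ref{se:! proof} (Lemma~\ref{le:A}) than to the paper's actual proof of this theorem; what the Mellin route buys is the analytic information about $\eta$ (natural boundary, behaviour at the origin), which the paper develops anyway and which has independent interest.

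One caution about your side remarks: the claim that $\sum_{n\le N}(n!)^\alpha$ is dominated by its last term, and hence that the factor $\log r/\log\log r$ could be dropped, is valid only for $\alpha>0$. For $\alpha=0$, which Theorem~\ref{thm:sp bd} deliberately includes (unlike Theorems~\ref{thm:!} and~\ref{thm:! bd}), every head term with $(n!)^\beta\le r^2$ is comparable to $r^{-2(\mu+1)}$, so the head is genuinely of order $N\,r^{-2(\mu+1)}$ with $N\asymp\frac{2\log r}{\beta\log\log r}$, and the logarithmic factor cannot be removed; moreover, your ``re-examination'' reverses the roles, since $(N+1)(N!)^\alpha$ is the cruder estimate, not the honest correction of an overcount. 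Because your actual write-up plan uses precisely that crude head bound, which is valid for all $\alpha\ge0$, these slips do not affect the correctness of your proof of the stated theorem.
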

The difficulties concerning the factorial Mathieu-type series
stem from the fact that the Mellin transform of $S^!_{\alpha,\beta,\mu}(\cdot)$
has a natural boundary in the form of a vertical line,
whereas that of $S_{\alpha,\beta,\gamma,\delta,\mu}(\cdot)$ is more regular,
featuring an analytic continuation with a single branch cut. See Sections~\ref{se:log}
and~\ref{se:dirichlet} for details. We therefore prove Theorem~\ref{thm:!} by a direct estimate;
see Section~\ref{se:! proof}. It will be clear from the proof
that the error term in~\eqref{eq:! est} can be refined, if desired.
Also, $d_1$ and $1-d_2$ may depend on~$r$, as long as they tend to zero
sufficiently slowly.
%
%
% alpha=0 !!
%As a special case of Theorem~\ref{thm:!}, we mention
%\[
%  \sum_{n=0}^\infty\frac{1}{n!+x} = \frac1x \exp\big( O(\log \log \log x)\big),
%  \quad x\uparrow \infty.
%\]

\section{Power-logarithmic sequences}\label{se:log}

Since~\eqref{eq:def S} is a series with positive terms,
the discrete Laplace method seems to be a natural asymptotic tool; see~\cite{Pa11} for
a good introduction and further references. However, while the summands of~\eqref{eq:def S} do have
a peak around $n\approx r^{2/\beta}$, the local expansion of the summand does not fully
capture the asymptotics, and the central part of the sum yields an incorrect constant factor.
A similar phenomenon has been observed in \cite{DrSo95,FrGe15} for \emph{integrals} that are not amenable
to the Laplace method. As in~\cite{Pa13}, we instead use a Mellin transform approach.
Since the Mellin transform seems not to be explicitly available in our case, we invoke
results from~\cite{GrTh96} on the analytic continuation of a certain Dirichlet series.
Before beginning with the Mellin transform analysis, we show
that Theorem~\ref{thm:main2} follows from Theorem~\ref{thm:main}. This
is the content of the following lemma.
\begin{lemma}\label{le:simplify}
  Let $\mathbf a$ and $\mathbf b$ be as in Theorem~\ref{thm:main2}. Then
  \begin{equation*}
     S_{\mathbf{a},\mathbf{b},\mu}(r) = S_{\alpha,\beta,\gamma,\delta,\mu}(r) \big(1+o(1)\big)
       +O\big(r^{-2(\mu+1)}(\log r)^{2\alpha+1}\big), \quad r\uparrow \infty.
  \end{equation*}
\end{lemma}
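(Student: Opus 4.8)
The plan is to split the series $S_{\mathbf{a},\mathbf{b},\mu}(r)$ into a ``bulk'' part, where the index $n$ is large enough that the asymptotic equivalences $a_n \sim n^\alpha(\log n)^\gamma$ and $b_n \sim n^\beta(\log n)^\delta$ have kicked in to within any prescribed multiplicative tolerance, and a ``head'' part consisting of the finitely many remaining terms; the head will be absorbed into the $O$-term. Concretely, fix $\varepsilon>0$ and choose $N=N(\varepsilon)$ so that for all $n\geq N$ we have $(1-\varepsilon)n^\alpha(\log n)^\gamma \leq a_n \leq (1+\varepsilon)n^\alpha(\log n)^\gamma$ and similarly $(1-\varepsilon)n^\beta(\log n)^\delta \leq b_n \leq (1+\varepsilon)n^\beta(\log n)^\delta$. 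Since $r\mapsto 1/(b+r^2)^{\mu+1}$ is decreasing in $b$, the tail $\sum_{n\geq N}$ is sandwiched between $(1-\varepsilon)\sum_{n\geq N} n^\alpha(\log n)^\gamma/((1+\varepsilon)n^\beta(\log n)^\delta+r^2)^{\mu+1}$ and the analogous upper expression. A trivial rescaling of $r$ (replacing $r^2$ by $r^2/(1\pm\varepsilon)$, which only perturbs the leading term in Theorem~\ref{thm:main} by a factor $1+O(\varepsilon)$, since $r$ enters through a fixed power and a $\log r$) shows that both bounds are $(1+O(\varepsilon))\,S_{\alpha,\beta,\gamma,\delta,\mu}(r)$ once $N$ is incorporated; the finitely many terms with $2\leq n< N$ that are missing from $S_{\alpha,\beta,\gamma,\delta,\mu}$ versus its full tail are themselves $O(r^{-2(\mu+1)})$, hence harmless. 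Letting $\varepsilon\downarrow0$ after $r\uparrow\infty$ gives the factor $1+o(1)$.

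The one genuinely quantitative point is the $O\big(r^{-2(\mu+1)}(\log r)^{2\alpha+1}\big)$ error term, which must account for the head $\sum_{n<N}$ — but here $N$ is a constant, so this sum is trivially $O(r^{-2(\mu+1)})$ — and, more importantly, for the mismatch between $a_n$ and $n^\alpha(\log n)^\gamma$ (resp.\ $b_n$ and $n^\beta(\log n)^\delta$) in the regime where $n$ is comparable to $r^{2/\beta}$, i.e.\ near the peak of the summand. I would estimate this mismatch crudely: in the bulk, $|a_n - n^\alpha(\log n)^\gamma| = o(n^\alpha(\log n)^\gamma)$, and the difference of the two sums is controlled, after a further use of monotonicity in $b$, by $o(1)$ times the comparison sum plus a contribution from the $b_n$-mismatch that one handles via the elementary inequality $|(b_n+r^2)^{-(\mu+1)} - (n^\beta(\log n)^\delta+r^2)^{-(\mu+1)}| \leq (\mu+1)|b_n - n^\beta(\log n)^\delta| \cdot \max(\cdots)^{-(\mu+2)}$. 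Summing the resulting bound — using that the effective range of $n$ is $n \asymp r^{2/\beta}$ with about $r^{2/\beta}/\log r$ relevant terms each of size roughly $r^{2\alpha/\beta - 2(\mu+1)}(\log r)^{\gamma-\delta(\mu+1)}$ or smaller — produces a power of $r$ matching the main term times a power of $\log r$; the stated exponent $2\alpha+1$ is a safe over-estimate that comes out of bounding $\log$-factors by the largest power that can appear. I would present this as: the error is $o\big(S_{\alpha,\beta,\gamma,\delta,\mu}(r)\big) + O\big(r^{-2(\mu+1)}(\log r)^{2\alpha+1}\big)$, the first piece being folded into the $(1+o(1))$ factor.

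The main obstacle is bookkeeping rather than conceptual: one must be careful that the interchange of the limits $\varepsilon\downarrow 0$ and $r\uparrow\infty$ is legitimate, which works because for \emph{each fixed} $\varepsilon$ the tail estimate holds for all $r$ large, and the leading constant depends continuously on the $(1\pm\varepsilon)$ perturbations. A clean way to organize this is to prove the two one-sided bounds
\[
  \limsup_{r\uparrow\infty} \frac{S_{\mathbf{a},\mathbf{b},\mu}(r) - O\big(r^{-2(\mu+1)}(\log r)^{2\alpha+1}\big)}{S_{\alpha,\beta,\gamma,\delta,\mu}(r)} \leq 1,
\]
and the corresponding $\liminf \geq 1$, each via the sandwiching above with a fixed $\varepsilon$, then send $\varepsilon\downarrow0$. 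The monotonicity of $b\mapsto(b+r^2)^{-(\mu+1)}$ is doing all the real work and should be invoked explicitly; no appeal to Theorem~\ref{thm:main} itself is needed for the \emph{shape} $r^{-2(\mu+1)}(\log r)^{2\alpha+1}$ of the error, only the elementary observation that $S_{\alpha,\beta,\gamma,\delta,\mu}(r) \gg r^{-2(\mu+1)}$ (indeed it is $\asymp r^{2(\alpha+1)/\beta - 2(\mu+1)}(\log r)^{\cdots}$ with $2(\alpha+1)/\beta>0$), so that a purely $r^{-2(\mu+1)}$-type term is automatically $o$ of the main term and the extra $(\log r)^{2\alpha+1}$ is a comfortable cushion.
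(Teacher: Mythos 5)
Your argument is correct in substance but takes a genuinely different route from the paper. The paper splits the sum at the $r$-dependent index $\lfloor\log r\rfloor$: on the head $n\le\lfloor\log r\rfloor$ it uses $a_n\lesssim n^{2\alpha}$ and $(b_n+r^2)^{-(\mu+1)}\sim r^{-2(\mu+1)}$, which is exactly where the stated error $O\big(r^{-2(\mu+1)}(\log r)^{2\alpha+1}\big)$ comes from, while on the tail the hypotheses $a_n\sim n^\alpha(\log n)^\gamma$ and $b_n\sim n^\beta(\log n)^\delta$ apply \emph{uniformly} because $n>\log r\to\infty$, so the factor $1+o(1)$ falls out directly with no $\varepsilon$-bookkeeping and no appeal to Theorem~\ref{thm:main}. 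You instead cut at a fixed $N(\varepsilon)$ and sandwich the tail; your finite head is then $O(r^{-2(\mu+1)})$, which is even stronger than the stated error term, so the lemma follows a fortiori. The one structural difference worth flagging is that your ``rescaling of $r$'' step compares $S_{\alpha,\beta,\gamma,\delta,\mu}\big(r/\sqrt{1\pm\varepsilon}\big)$ with $S_{\alpha,\beta,\gamma,\delta,\mu}(r)$ by invoking Theorem~\ref{thm:main}. This is logically admissible (Theorem~\ref{thm:main} is proved by Mellin analysis independently of the lemma, which is only needed for Theorem~\ref{thm:main2}), but it makes the lemma non-self-contained and sits a little awkwardly with your own closing remark that no appeal to Theorem~\ref{thm:main} is needed; you can avoid it entirely via the elementary inequalities $(1+\varepsilon)n^\beta(\log n)^\delta+r^2\le(1+\varepsilon)\big(n^\beta(\log n)^\delta+r^2\big)$ and $(1-\varepsilon)n^\beta(\log n)^\delta+r^2\ge(1-\varepsilon)\big(n^\beta(\log n)^\delta+r^2\big)$, which sandwich the tail between $(1-\varepsilon)(1+\varepsilon)^{-(\mu+1)}$ and $(1+\varepsilon)(1-\varepsilon)^{-(\mu+1)}$ times $S_{\alpha,\beta,\gamma,\delta,\mu}(r)$ at the \emph{same} $r$, up to the finite head which is $O(r^{-2(\mu+1)})$. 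Your second paragraph, on controlling a mismatch near the peak $n\asymp r^{2/\beta}$, is superfluous once the $\varepsilon$-sandwich is in place and is the only hand-wavy portion; the first and third paragraphs alone, with the limit interchange ($r\uparrow\infty$ first, then $\varepsilon\downarrow0$) handled as you describe, suffice.
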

\begin{proof}
  First consider the summation range $0\leq n\leq \lfloor\log r \rfloor$ for the series defining
  $ S_{\mathbf{a},\mathbf{b},\mu}(r) $.
  %There is $c>0$ such that $b_n\leq c n^\beta(\log n)^\gamma$ for all $n\geq2$.
  We have the estimate
  \begin{align*}
    b_n + r^2 &= O(n^\beta(\log n)^\gamma) + r^2 \\
    &= r^2(1 + O\big((\log r)^{2\beta}/r^2\big) \\
    &= r^2\big(1+o(1)\big), \quad r\uparrow \infty,
  \end{align*}
  and thus
  \[
    (b_n+r^2)^{-(\mu+1)} = r^{-2(\mu+1)}\big(1+o(1)\big), \quad
      0\leq n\leq \lfloor\log r \rfloor.
  \]
  We obtain
  \begin{align}
    \sum_{n=0}^{\lfloor\log r \rfloor} \frac{a_n}{\big(b_n+r^2\big)^{\mu+1}} &\lesssim
      \sum_{n=0}^{\lfloor\log r \rfloor} \frac{n^{2\alpha}}{\big(b_n+r^2\big)^{\mu+1}} \notag \\
    &\sim  r^{-2(\mu+1)} \sum_{n=0}^{\lfloor\log r \rfloor} n^{2\alpha} \notag \\
    &= O\big(r^{-2(\mu+1)}(\log r)^{2\alpha+1}\big). \label{eq:lower}
  \end{align}
  Now consider the range $\lfloor\log r \rfloor < n < \infty$, which yields the main contribution.
  As for the denominator, we have
  \begin{align}
    b_n + r^2 &= n^\beta (\log n)^\delta + o(n^\beta (\log n)^\delta) + r^2 \notag \\
    &=  (n^\beta (\log n)^\delta + r^2)
      \Big(1 + \frac{o(n^\beta (\log n)^\delta)}{n^\beta (\log n)^\delta + r^2}\Big) \notag \\
    &= (n^\beta (\log n)^\delta + r^2)\big(1+o(1) \big). \label{eq:bn}
  \end{align}
  Note that the first two $o(\cdot)$ are meant for $n\uparrow \infty$, but then the term
  $\frac{o(n^\beta (\log n)^\delta)}{n^\beta (\log n)^\delta + r^2}$ is also uniformly $o(1)$ as
  $r\uparrow\infty$, because $r\uparrow\infty$ implies $n\uparrow\infty$ in the range
  $\lfloor\log r \rfloor < n < \infty$. Similarly, we have
  \begin{equation}\label{eq:an}
    a_n = n^\alpha (\log n)^\gamma \big(1+o(1) \big), \quad r\uparrow \infty.
  \end{equation}
  Therefore,
  \begin{align}
    \sum_{n > \lfloor\log r \rfloor} \frac{a_n}{\big(b_n+r^2\big)^{\mu+1}}
      &\sim  \sum_{n > \lfloor\log r \rfloor} \frac{n^\alpha (\log n)^\gamma}
        {\big(n^\beta (\log n)^\delta + r^2\big)^{\mu+1}} \notag \\
    &= S_{\alpha,\beta,\gamma,\delta,\mu}(r) + O\big(r^{-2(\mu+1)}(\log r)^{2\alpha+1}\big).
      \label{eq:upper}
  \end{align}
  Here, the asymptotic equivalence follows from~\eqref{eq:bn} and~\eqref{eq:an}, and the equality
  follows from~\eqref{eq:lower}. The statement now follows by combining~\eqref{eq:lower} and~\eqref{eq:upper}.
\end{proof}
We now begin the proof of Theorem~\ref{thm:main}.
As in~\cite{GrTh96}, define the Dirichlet series
\begin{equation}\label{eq:dirichlet}
  \zeta_{\eta,\theta}(s) := \sum_{n=2}^\infty \frac{(\log n)^\eta}{(n (\log n)^\theta)^s},
  \quad \mathrm{Re}(s)>1,
\end{equation}
with real parameters $\eta,\theta$.
We will see below that the Mellin transform of~\eqref{eq:def S}
can be expressed using $\zeta_{\eta,\theta}(s)$.
The first two statements of the following
lemma are taken from~\cite{GrTh96}.
\begin{lemma}\label{le:dirichlet}
  The Dirichlet series $\zeta_{\eta,\theta}$ has an analytic continuation to the whole
  complex plane except $(-\infty,1]$. As $s\to 1$ in this domain, we have the asymptotics
  \[
     \zeta_{\eta,\theta}(s) \sim
     \begin{cases}
       \frac{(-1)^{m-1}}{(m-1)!} (s-1)^{m-1} \log \frac{1}{s-1} & \text{if }
         m=\theta-\eta \in \mathbb N, \\
       \Gamma(\eta-\theta+1)(s-1)^{\theta-\eta-1} & \text{otherwise.}
     \end{cases}
  \]
  The analytic continuation grows at most polynomially as
  $|\mathrm{Im}(s)|\uparrow\infty$ while $\mathrm{Re}(s)$ is bounded and positive.
\end{lemma}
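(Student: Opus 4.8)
The first two assertions are quoted directly from~\cite{GrTh96}, so the plan is to concentrate on the third one, the polynomial bound for the analytic continuation in vertical strips. I would obtain this by writing down an explicit representation of the continuation (one that, as a by-product, reproves the first two assertions) and estimating its pieces separately.

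Concretely, set $g(x,s):=(\log x)^{\eta-\theta s}x^{-s}$ and apply Euler--Maclaurin summation of order $2K$ to $\sum_{n\geq 2}g(n,s)$, which for $\mathrm{Re}(s)>1$ gives
\[
  \zeta_{\eta,\theta}(s)=\int_2^\infty g(x,s)\,\dd x+\tfrac12 g(2,s)-\sum_{j=1}^K\frac{B_{2j}}{(2j)!}\,\partial_x^{2j-1}g(2,s)-R_K(s),
\]
where $R_K(s)$ is the remainder integral of $\partial_x^{2K}g(x,s)$ against the periodic Bernoulli function. The substitution $x=\e^u$ turns $\int_2^\infty g(x,s)\,\dd x$ into $\int_{\log 2}^\infty u^{\eta-\theta s}\e^{-(s-1)u}\,\dd u$, and splitting $\int_{\log 2}^\infty=\int_0^\infty-\int_0^{\log 2}$ (legitimate first for $\mathrm{Re}(s)>1$, $\mathrm{Re}(\eta-\theta s)>-1$, then by continuation) rewrites this as $\Gamma(\eta-\theta s+1)(s-1)^{-(\eta-\theta s+1)}-\int_0^{\log 2}u^{\eta-\theta s}\e^{-(s-1)u}\,\dd u$. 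Since $R_K$ is holomorphic for $\mathrm{Re}(s)>1-2K$ and the continued elementary integral has only real poles, which cancel against those of the $\Gamma$-factor, one gets a representation of $\zeta_{\eta,\theta}$ holomorphic on $\{\mathrm{Re}(s)>1-2K\}\setminus(-\infty,1]$, the branch cut being exactly the one carried by $(s-1)^{-(\eta-\theta s+1)}$; letting $K\to\infty$ re-derives the first assertion (expanding the first two terms at $s=1$ also re-derives the second).

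For the growth estimate, fix a strip $a_0\leq\mathrm{Re}(s)\leq b_0$ with $0<a_0$, take $K$ with $1-2K<a_0$, and let $t:=\mathrm{Im}(s)\to\pm\infty$. Because $x$-differentiation of $g$ only produces polynomial-in-$s$ coefficients times $x^{-s-j}(\log x)^{\eta-\theta s-i}$, the quantities $g(2,s)$, $\partial_x^{2j-1}g(2,s)$ and $R_K(s)$ are each $\Oh\big((1+|t|)^{2K}\big)$, the integral defining $R_K$ converging since $\mathrm{Re}(s)+2K>1$. As $|\e^{-(s-1)u}|=\e^{-(\mathrm{Re}(s)-1)u}$ is bounded on $u\in[0,\log 2]$, the elementary integral is $\Oh(1)$ when $\mathrm{Re}(\eta-\theta s)>-1$, and $\Oh\big((1+|t|)^N\big)$ in general after subtracting finitely many Taylor terms of $\e^{-(s-1)u}$. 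The only genuinely delicate term is $\Gamma(\eta-\theta s+1)(s-1)^{-(\eta-\theta s+1)}$: writing $\nu(s):=\eta-\theta s+1$, whose imaginary part grows linearly in $t$, both factors vary exponentially along vertical lines, but Stirling gives $|\Gamma(\nu(s))|\asymp|\theta t|^{\mathrm{Re}\,\nu(s)-1/2}\e^{-\pi|\theta t|/2}$ while $|(s-1)^{-\nu(s)}|=|s-1|^{-\mathrm{Re}\,\nu(s)}\e^{-\theta t\arg(s-1)}$ with $\arg(s-1)\to\pm\pi/2$, and the two exponential rates cancel (for $\theta>0$ the product even decays faster than any power). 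Hence this term too is at most a fixed power of $|t|$, and adding up yields $|\zeta_{\eta,\theta}(s)|\ll(1+|t|)^A$ for some $A=A(a_0,b_0,\eta,\theta)$, which is the claim.

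The main obstacle is precisely this last term: one must verify that the exponential decay of $\Gamma(\eta-\theta s+1)$ away from the real axis exactly absorbs the potentially exponentially large factor $\e^{-\theta t\arg(s-1)}$ produced by $(s-1)^{-(\eta-\theta s+1)}$ when $\theta<0$. This is a short Stirling computation, but it is the step where a careless bound would wrongly yield exponential growth. A subsidiary point to watch is that the continuations of $\int_0^{\log 2}u^{\eta-\theta s}\e^{-(s-1)u}\,\dd u$ and of $R_K$ introduce no spurious singularities off $(-\infty,1]$: the poles of the former sit on the real axis and cancel against those of the $\Gamma$-factor, so the representation is holomorphic throughout $\{\mathrm{Re}(s)>1-2K\}\setminus(-\infty,1]$.
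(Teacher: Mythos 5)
Your proposal is correct and follows essentially the same route as the paper: Euler--Maclaurin summation applied to $f(x)=(\log x)^{\eta-\theta s}x^{-s}$, reduction of the main integral to $\Gamma(\eta-\theta s+1)(s-1)^{\theta s-\eta-1}$ plus an incomplete-gamma-type correction, polynomial bounds for the boundary and remainder terms, and the Stirling cancellation between the exponential decay of $\Gamma(\eta-\theta s+1)$ and the factor $\e^{-\theta\,\mathrm{Im}(s)\arg(s-1)}$, which is exactly the decisive step in the paper's proof. The only cosmetic difference is your real substitution $x=\e^u$ with a split at $\log 2$ in place of the paper's substitution $x=\exp\big(z/(s-1)\big)$; both lead to the same decomposition and the same estimates.
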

\begin{proof}
  The statements about analytic continuation and asymptotics are proved
  in~\cite{GrTh96}. We revisit this proof in order to prove the polynomial estimate,
  which is needed later to apply Mellin inversion. By the Euler--Maclaurin summation formula,
  we have
  \begin{equation}\label{eq:euler}
    \zeta_{\eta,\theta}(s) = \int_2^\infty f(x) dx +\frac{f(2)}{2}
    -\sum_{k=1}^2 \frac{B_{2k}}{(2k)!}f^{(2k-1)}(2)
    - \int_2^\infty \frac{B_2(x-\lfloor x \rfloor)}{2} f^{(2)}(x)dx,
  \end{equation}
  where
  \[
    f(x) := \frac{(\log x)^\eta}{(x (\log x)^\theta)^s}
  \]
  and the $B$s are Bernoulli numbers resp.\ polynomials. As noted in~\cite{GrTh96},
  the last integral in~\eqref{eq:euler} is holomorphic in $\mathrm{Re}(s)>-1$,
  and applying the Euler--Maclaurin formula of arbitrary order yields the full
  analytic continuation, after analyzing the first integral in~\eqref{eq:euler}.
  To prove our lemma, it remains to estimate the growth of the terms in~\eqref{eq:euler}.
  The dominating factor of $f^{(2)}(x)$ satisfies
  \[
    \Big|\frac{\partial^2}{\partial x^2}x^{-s}\Big| = |s(s+1)| x^{-\mathrm{Re}(s)-2},
  \]
  from which it is very easy to see that the last integral in~\eqref{eq:euler} grows
  at most polynomially under the stated conditions on~$s$. In the first integral
  in~\eqref{eq:euler}, we substitute
  \begin{equation}\label{eq:subs}
    x=\exp\big(z/(s-1)\big)
  \end{equation}
  (as in~\cite{GrTh96}) and obtain
  \begin{align}
    \int_2^\infty f(x) dx &= (s-1)^{\theta s-\eta-1} \int_{(s-1)\log 2}^\infty
      z^{\eta-\theta s} e^{-z} dz \notag \\
      &= (s-1)^{\theta s-\eta-1}\bigg(\Gamma(\eta-\theta s+1)
        + \int_0^{(s-1)\log 2}z^{\eta-\theta s} e^{-z} dz \bigg). \label{eq:int}
  \end{align}
  From Stirling's formula, we have
  \[
    \Gamma(t) = O(e^{- \pi |\mathrm{Im}(t)|/2}|t|^{\mathrm{Re}(t)-1/2}),
    \quad |t|\uparrow \infty,
  \]
  uniformly w.r.t.~$\mathrm{Re}(t)$, as long as $\mathrm{Re}(t)$ stays bounded. Using this and
  \[
    |(s-1)^{-\theta s}| = \exp\big(-\theta\, \mathrm{Re}(s) \log |s-1|
    +\theta\, \mathrm{Im}(s) \arg(s-1) \big),
  \]
  we see that $|(s-1)^{\theta s-\eta-1}\Gamma(\eta-\theta s+1)|$ can be estimated
  by a polynomial in~$s$. Finally, we have
  \begin{multline*}
    \int_0^{(s-1)\log 2}z^{\eta-\theta s} e^{-z} dz \\
      = \big((s-1) \log 2\big)^{\eta-\theta s+1}
      \int_0^1 \big((s-1) u\log 2\big)^{\eta-\theta s}
      e^{(1-s)u \log 2}du,
  \end{multline*}
  from which it is immediate that the term
  \[
    (s-1)^{\theta s-\eta-1}\int_0^{(s-1)\log 2}z^{\eta-\theta s} e^{-z} dz
  \]
  in~\eqref{eq:int} admits a polynomial estimate.
\end{proof}

For any sufficiently regular function~$f$, we denote the Mellin transform by~$f^*$,
\[
  f^*(s) := \int_0^\infty f(r) r^{s-1} dr.
\]
We now compute the Mellin transform of the function $S_{\alpha,\beta,\gamma,\delta,\mu}(r)$,
writing  $a_n = n^\alpha (\log n)^\gamma$ and $b_n = n^\beta (\log n)^\delta.$
\begin{align}
  S^*_{\alpha,\beta,\gamma,\delta,\mu}(s) &=
    \int_{0}^\infty S_{\alpha,\beta,\gamma,\delta,\mu}(r) r^{s-1}dr \notag \\
    &= \sum_{n=2}^\infty a_n \int_{0}^\infty \frac{r^{s-1}}{(b_n+r^2)^{\mu+1}} dr\notag \\
    &= \frac12\sum_{n=2}^\infty a_n b_n^{s/2-(\mu+1)}
      \int_{0}^\infty\frac{u^{s/2-1}}{(1+u)^{\mu+1}}du\notag \\
   &=  \frac{D(s) \Gamma(\mu+1-s/2) \Gamma(s/2)}{2\Gamma(\mu+1)}, \label{eq:mellin}
\end{align}
where we substituted $u=r^2/b_n,$ and
\begin{align*}
  D(s) &:= \sum_{n=2}^\infty n^\alpha (\log n)^\gamma
  \big(n^\beta(\log n)^{\delta}\big)^{s/2-(\mu+1)}  \\
  &= \sum_{n=2}^\infty (\log n)^{ \delta s/2+\gamma-\delta(\mu+1)}
    n^{ \beta s/2+\alpha -\beta(\mu+1)}.  \\
\end{align*}
The Dirichlet series~$D$ can be expressed in terms of $\zeta_{\eta,\theta}$ from~\eqref{eq:dirichlet}:
\begin{equation}  \label{eq:D}
  D(s) = \zeta_{\eta,\theta}\big(1+\tfrac12 \beta(\hat{s}-s)\big)
  \Big|_{\eta = \gamma -\alpha \delta/\beta,\ \theta = \delta/\beta} 
\end{equation}
with
\begin{equation}\label{eq:hat s}
  \hat s := -2 (\alpha+1)/\beta +2(\mu+1) < 2\mu+2.
\end{equation}
Formula~\eqref{eq:mellin} is valid for $\mathrm{Re}(s) \in (0,\hat s)$.
The function $\Gamma(\mu+1-s/2)$ has poles at $2\mu+2,2\mu+4,\dots$, and
those of $\Gamma(s/2)$ are $0,-2,-4,\dots$ 
All those poles are outside the strip $\{s \in \mathbb{C} : \mathrm{Re}(s) \in (0,\hat s)\}.$
The singular expansion
of~\eqref{eq:mellin} at the dominating singularity~$\hat s$ can be translated,
via the Mellin inversion formula, into the asymptotic behavior of $S_{\alpha,\beta,\gamma,\delta,\mu}(r)$.
See~\cite{FlGoDu95} for a standard introduction to this method; in fact, our generalized
Mathieu series~\eqref{eq:def S gen} is a harmonic sum in the terminology of~\cite{FlGoDu95}.
By  Mellin inversion, we have
\begin{align}
  S_{\alpha,\beta,\gamma,\delta,\mu}(r) &=  \frac{1}{2\pi i} \int_{\kappa-i\infty}^{\kappa+i\infty}
    r^{-s} S^*_{\alpha,\beta,\gamma,\delta,\mu}(s) ds \notag \\
  &= \frac{1}{2\Gamma(\mu+1)}
  \frac{1}{2\pi i}\int_{\kappa-i\infty}^{\kappa+i\infty}
    r^{-s} D(s)\Gamma(\mu+1-s/2)\Gamma(s/2)ds, \label{eq:mint}
\end{align}
where $\kappa \in (0,\hat s)$. Note that integrability of $S^*_{\alpha,\beta,\gamma,\delta,\mu}(s)$
follows from the polynomial estimate in
Lemma~\ref{le:dirichlet} and Stirling's formula, as the latter implies
\begin{equation}\label{eq:stir im}
  \Gamma(t) = O\big(\exp(-(\tfrac12 \pi -\varepsilon) |\mathrm{Im}(t)|\big)
\end{equation}
for bounded $\mathrm{Re}(t)$.
Suppose first that
\[
  \theta-\eta=\delta (\alpha+1)/\beta -\gamma \notin \mathbb N.
\]
Then, from Lemma~\ref{le:dirichlet} and~\eqref{eq:D}, we have
\begin{align}
  D(s) &\sim \Gamma(\delta (\alpha+1)/\beta -\gamma+1)
   \big(\tfrac12 \beta(\hat s - s)\big)^{\delta (\alpha+1)/\beta -\gamma-1} \notag \\
  &= c_1 (\hat s - s)^{-c_2}, \quad s\to\hat s, \label{eq:D c1}
\end{align}
with
\begin{align}
  c_1 &:= \Gamma(\delta (\alpha+1)/\beta -\gamma+1)(\tfrac12 \beta)^{\delta (\alpha+1)/\beta -\gamma-1}, \notag \\
  c_2 &:= -\delta (\alpha+1)/\beta +\gamma+1. \label{eq:c2}
\end{align}
Combining~\eqref{eq:mellin} and~\eqref{eq:D c1} yields
\begin{equation}\label{eq:S^* asympt}
  S^*_{\alpha,\beta,\gamma,\delta,\mu}(s) \sim c_3  (\hat s - s)^{-c_2}, \quad s\to\hat s,
\end{equation}
where
\begin{equation}\label{eq:c3}
  c_3 :=   \frac{ c_1 \Gamma(\mu+1-\hat{s}/2) \Gamma(\hat{s}/2)}{2\Gamma(\mu+1)}.
\end{equation}
By a standard procedure, we can now extract
asymptotics of the Mathieu-type series $S_{\alpha,\beta,\gamma,\delta,\mu}(r)$ from~\eqref{eq:mint}.
The integration contour in~\eqref{eq:mint}
is pushed to the right, which is allowed by Lemma~\ref{le:dirichlet}. 
The real part of the new contour is
\[
  \kappa_r := \hat s + \frac{\log \log r}{\log r},
\]
where the singularity
at $s=\hat s$ is avoided by a small C-shaped notch. In~\eqref{eq:notch} below,
this notch is the integration contour. The contour is then transformed
to a Hankel contour~$\mathcal H$ by the substitution $s=\hat s-w/\log r$.
The contour~$\mathcal H$ starts at $-\infty$, circles the origin counterclockwise
and continues back to $-\infty$.
Using~\eqref{eq:S^* asympt}, we thus obtain
\begin{align}
  S_{\alpha,\beta,\gamma,\delta,\mu}(r) &= 
  \frac{1}{2\pi i}\int_{\kappa_r-i\infty}^{\kappa_r+i\infty}
    r^{-s} S^*_{\alpha,\beta,\gamma,\delta}(s)ds \notag \\
  &\sim \frac{c_3}{2\pi i} \int r^{-s} (\hat s - s)^{-c_2} ds \label{eq:notch} \\
  &\sim c_3 r^{-\hat s} (\log r)^{c_2-1}\frac{1}{2\pi i}\int_{\mathcal H} e^w w^{-c_2} dw \notag \\
  &= \frac{c_3}{\Gamma(c_2)}r^{-\hat s} (\log r)^{c_2-1}. \notag
\end{align}
See~\cite{FlGoDu95,FlOd90,FrGe15,GrTh96} for details of this asymptotic transfer. This completes the 
proof of~\eqref{eq:main} in the case $\delta (\alpha+1)/\beta -\gamma \notin \mathbb N$.
Recall the definitions of the constants $\hat{s}, c_2, c_3$
in~\eqref{eq:hat s}, \eqref{eq:c2}, and~\eqref{eq:c3}.

Now suppose that
\begin{equation}\label{eq:m}
  m:=\theta-\eta=\delta (\alpha+1)/\beta -\gamma \in \mathbb N.
\end{equation}
We need to show that~\eqref{eq:main} still holds, but with the constant
factor now given by~\eqref{def:C int}.
By Lemma~\ref{le:dirichlet} and~\eqref{eq:D}, we have
\begin{align}
  D(s) &\sim \frac{(-1)^{m-1}}{(m-1)!} (\tfrac12 \beta)^{m-1}(\hat s- s)^{m-1}
    \log \frac{1}{\hat s- s} \notag \\
  &= c_4 (\hat s- s)^{m-1} \log \frac{1}{\hat s- s}, \quad s\to\hat s, \label{eq:D as}
\end{align}
where
\[
  c_4 := \frac{(-1)^{m-1}}{(m-1)!} (\tfrac12 \beta)^{m-1}.
\]
Define
\begin{equation}\label{eq:c5}
  c_5 := \frac{c_4  \Gamma(\mu+1-\hat{s}/2) \Gamma(\hat{s}/2)}{2\Gamma(\mu+1)}.
\end{equation}
Then, using~\eqref{eq:mellin} and~\eqref{eq:D as},
\begin{equation*}
  S^*_{\alpha,\beta,\gamma,\delta,\mu}(s) \sim c_5  (\hat s - s)^{m-1}
    \log \frac{1}{\hat s- s}, \quad s\to\hat s.
\end{equation*}
We proceed similarly as above (see again~\cite{FlGoDu95,FlOd90,GrTh96}) and find
\begin{align}
  S_{\alpha,\beta,\gamma,\delta,\mu}(r) &\sim c_5 r^{-\hat s}(\log r)^{-m}
    \frac{1}{2\pi i} \int_{\mathcal H} e^w w^{m-1} \big(\log \frac{e^r}{w}\big) dw \notag\\
  &\sim c_5 r^{-\hat s}(\log r)^{-m}
    \frac{1}{2\pi i} \int_{\mathcal H} e^w w^{m-1} (-\log w) dw \notag \\
  &= c_5  \Big(\frac{1}{\Gamma}\Big)'(1-m) \times r^{-\hat s}(\log r)^{-m}. \label{eq:N res}
\end{align}
As for the second $\sim$, note that
\[
  \frac{1}{2\pi i} \int_{\mathcal H} e^w w^{m-1}  dw = \frac{1}{\Gamma(1-m)} = 0.
\]
From the well-known residues of~$\Gamma$ and~$\psi$ at the non-positive integers
(see, e.g., p.241 in~\cite{WhWa96}), we obtain
\[
  \Big(\frac{1}{\Gamma}\Big)'(1-m) = 
  -\Big(\frac{\psi}{\Gamma}\Big)(1-m)  =(-1)^{m-1} (m-1)!, \quad m\in\mathbb N.
\]
Formula~\eqref{eq:main} is established, and Theorem~\ref{thm:main}
is proved.
As for the constants in~\eqref{eq:N res}, recall the definitions in~\eqref{eq:hat s},
\eqref{eq:m}, and~\eqref{eq:c5}.
As mentioned above, Theorem~\ref{thm:main2} follows from Theorem~\ref{thm:main}
and Lemma~\ref{le:simplify}.

\section{Factorial sequences: the associated Dirichlet series}\label{se:dirichlet}

In the Mellin transform of~\eqref{eq:def S!}, the following Dirichlet series occurs:
\begin{equation}\label{eq:eta}
  \eta(s) := \sum_{n=0}^\infty (n!)^{-s}, \quad \mathrm{Re}(s) >0.
\end{equation}
As we will see in Lemma~\ref{le:dirichlet!}, this function does not have an analytic continuation beyond
the right half-plane.
It is well known that the presence
of a natural boundary is a severe obstacle when doing asymptotic transfers; see~\cite{FlFuGoPaPo06}
and the references cited there.
Therefore, our proof of Theorem~\ref{thm:!} in Section~\ref{se:! proof} will \emph{not} use Mellin
transform asymptotics. Still, some analytic properties of~\eqref{eq:eta}
seem to be interesting in their own right, and will be discussed in the present section.
We note that the arguments at the beginning of the proof of Lemma~\ref{le:dirichlet!} 
(analyticity, natural boundary) suffice to identify the \emph{location}
of the singularity of the Mellin transform
of $S^!_{\alpha,\beta,\mu}(\cdot)$ (see~\eqref{eq:inv! explicit} below), and
thus yield the logarithmic asymptotics in~\eqref{eq:log as} with the weaker error term $o(\log r)$.
Moreover, in this section we will prove Theorem~\ref{thm:sp bd};
see~\eqref{eq:sp bd} below.
\begin{lemma}\label{le:dirichlet!}
  The function~$\eta$ is analytic in the right half-plane, and the imaginary axis $i\mathbb R$
  is a natural boundary. At the origin, we have the asymptotics
  \begin{equation}\label{eq:F as}
    \eta(s) \sim \frac{1}{s\log(1/s)}, \quad s \downarrow 0, \ s \in\mathbb R.
  \end{equation}
\end{lemma}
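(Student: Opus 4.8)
I would establish the three assertions in turn, the asymptotics at the origin being the one that needs real work. For analyticity: if $\mathrm{Re}(s)=\sigma>0$ then $|(n!)^{-s}|=(n!)^{-\sigma}$, and since $\log n!\sim n\log n$ the series~\eqref{eq:eta} converges absolutely and locally uniformly on the right half-plane, so $\eta$ is holomorphic there. For the natural boundary, write $\eta(s)=\sum_{n\ge0}e^{-\lambda_n s}$ with exponents $\lambda_n:=\log n!$; the abscissa of convergence is $0$ (for $\mathrm{Re}(s)\le0$ the terms do not go to $0$), and the exponents are super-sparse, $\lambda_{n+1}-\lambda_n=\log(n+1)\to\infty$, so $(\lambda_n)$ has density zero, $n/\lambda_n\to0$. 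A gap theorem for Dirichlet series of Fabry--P\'olya type---a zero-density exponent sequence forces every point of the line of convergence to be singular---then yields that $i\mathbb R$ is a natural boundary.

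For~\eqref{eq:F as}, I would introduce the counting function $N(u):=\#\{n\ge0:\log n!\le u\}$, which is non-decreasing with $N(u)=O(u)$; then $\eta$ is its Laplace--Stieltjes transform, and Abel summation gives
\[
  \eta(s)=\int_0^\infty e^{-su}\,dN(u)=s\int_0^\infty e^{-su}N(u)\,du,\qquad s>0,
\]
the boundary terms vanishing because $N(u)=O(u)$. Thus~\eqref{eq:F as} becomes an Abelian statement about this transform as $s\downarrow0$, determined entirely by the growth of $N$.

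To find that growth, note that $N(u)$ counts the $n$ with $\Gamma(n+1)\le e^u$, so, $\Gamma$ being increasing on $[2,\infty)$, $N(u)=\Gamma^{-1}(e^u)+O(1)$. Stirling's formula gives $\log\Gamma(z)\sim z\log z$, which inverts to $\Gamma^{-1}(y)\sim\log y/\log\log y$, whence $N(u)\sim\Gamma^{-1}(e^u)\sim u/\log u$ as $u\to\infty$. Since $u/\log u$ is regularly varying of index $1$, Karamata's theorem for Laplace--Stieltjes transforms yields $\eta(s)\sim s^{-1}/\log(1/s)$, i.e.~\eqref{eq:F as}. Equivalently and more concretely: with $v=su$, $\eta(s)=\int_0^\infty e^{-v}N(v/s)\,dv$ and $N(v/s)\sim\frac{v/s}{\log v+\log(1/s)}$; since $\log v+\log(1/s)\sim\log(1/s)$ on the bulk of the measure $v\,e^{-v}\,dv$, this integral is $\sim\frac{1}{s\log(1/s)}\int_0^\infty v\,e^{-v}\,dv=\frac{1}{s\log(1/s)}$.

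The main obstacle is making this last Abelian passage rigorous: the replacement $N(v/s)\to\frac{v}{s\log(1/s)}$ is not uniform near the lower end of the integration range, where $\log v+\log(1/s)$ can be as small as a fixed constant, so a crude bound there would carry a spurious factor $\log(1/s)$. I would resolve this by splitting the $v$-integral at a fixed small $\delta>0$: on $[\delta,\infty)$ the ratio $\log(1/s)/(\log v+\log(1/s))\to1$ uniformly and dominated convergence applies, while the contribution of $(0,\delta]$ is $O(\delta^2)$ uniformly in $s$ (using $\int_U^X w/\log w\,dw\sim X^2/(2\log X)$); letting $\delta\downarrow0$ after $s\downarrow0$ then closes the estimate, and the error from $N(u)\sim u/\log u$ is absorbed in the same way, costing only a $1+o(1)$ factor. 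If instead one wanted a self-contained proof of the natural boundary rather than citing the gap theorem, that---and not the asymptotics---would be the harder part.
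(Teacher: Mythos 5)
Your proposal is correct, and for the two structural claims (analyticity via absolute convergence, natural boundary via a density-zero gap theorem for the exponents $\lambda_n=\log n!$) it matches the paper, which likewise reduces to the lacunary series $\sum_n z^{\log n!}$ and cites the corresponding gap/density result. For the asymptotics~\eqref{eq:F as}, however, you take a genuinely different route. The paper works in the transform world: it first shows that the Dirichlet series $\sum_{n\ge 2}(\log n!)^{-s}$ continues analytically to $\mathrm{Re}(s)>0$ minus the cut $(0,1]$ (by comparison with $\sum (n\log n-n)^{-s}$ and the Euler--Maclaurin analysis of Lemma~\ref{le:dirichlet}), computes the Mellin transform of $s\,\eta(s)$ in the variable $s$, identifies its logarithmic singularity via $\zeta_{0,1}$, and then invokes the asymptotic transfer lemma of~\cite{GrTh96} to return to $s\downarrow 0$. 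You instead argue on the real line only: write $\eta$ as the Laplace--Stieltjes transform of the counting function $N(u)=\#\{n:\log n!\le u\}$, establish $N(u)\sim u/\log u$ by first-order inversion of Stirling (which, as the paper notes around~\eqref{eq:Gamma inv}, is elementary and does not need the unproven expansion of~\cite{BoCo18}), and conclude by Karamata's Abelian theorem for regularly varying functions of index $1$ with slowly varying part $1/\log u$, or equivalently by your explicit split of $\int_0^\infty e^{-v}N(v/s)\,dv$ at a fixed $\delta$. This is sound: the Abelian direction of Karamata needs no Tauberian condition, the boundary terms vanish since $N(u)=O(u)$, and your $\delta$-splitting correctly isolates the only delicate region $v/s=O(1)$ (your claim that the ratio $\log(1/s)/(\log v+\log(1/s))$ tends to $1$ \emph{uniformly} on all of $[\delta,\infty)$ is not literally true for $v$ growing like a power of $1/s$, but dominated convergence, which you also invoke, covers this, e.g.\ with dominating function a constant times $v e^{-v}$). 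What each approach buys: yours is more elementary and self-contained, and it delivers exactly what the lemma asserts and what the paper later uses (only real values of $\eta$, via~\eqref{eq:abs} and~\eqref{eq:bd F}); the paper's Mellin route is heavier but produces, as a by-product, the analytic continuation and polynomial growth of $\sum(\log n!)^{-s}$ and would in principle control $\eta$ near $0$ beyond the real axis, which a purely Abelian real-variable argument does not.
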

\begin{proof}
   Analyticity follows from a standard result on Dirichlet series, see e.g.\ p.5 in~\cite{HaRi15}.
   As $n/\! \log n!=o(1)$, the lacunary series $\sum _{n=0}^\infty z^{\log n!}$
   has the unit circle as a natural boundary. We refer to the introduction of~\cite{CoHu09}
   for details. This implies that $i\mathbb R$ is the natural boundary of
   \[
     \eta(s) = \sum _{n=0}^\infty z^{\log n!}\Big|_{z=e^{-s}}.
   \]
   It remains to prove~\eqref{eq:F as}. We begin by showing that the Dirichlet series
   \begin{equation}\label{eq:dir log}
      \sum _{n=2}^\infty (\log n!)^{-s}, \quad \mathrm{Re}(s)>1,
   \end{equation}
   has an analytic continuation to $\mathrm{Re}(s)>0$, with branch cut
   $(0,1]$. The main idea is that replacing
   $\log n!$ by $n\log n$ leads to the series from Lemma~\ref{le:dirichlet},
   and the properties of~\eqref{eq:dir log} that we need are the same as those stated there.
   We just do not care about continuation further left than $\mathrm{Re}(s)>0$, because we do not require it.
   The continuation of~\eqref{eq:dir log} is based on writing
   \begin{equation}\label{eq:dir log2}
     \sum _{n=3}^\infty (\log n!)^{-s} =
       \sum_{n=3}^\infty \Big(  (\log n!)^{-s} -(n\log n-n)^{-s} \Big) 
       + \sum_{n=3}^\infty (n\log n-n)^{-s}.
   \end{equation} 
   By Stirling's formula, we have
   \begin{align*}
     (\log n!)^{-s} &= (n\log n-n)^{-s}\big(1+O(1/n)\big)^{-s} \\
     &=  (n\log n-n)^{-s}\big(1+O(1/n)\big),
   \end{align*}
   locally uniformly w.r.t.~$s$ in the right half-plane. From this it follows that
   \[
     \sum_{n=3}^\infty \Big(  (\log n!)^{-s} -(n\log n-n)^{-s} \Big) 
   \]
   defines an analytic function of~$s$ for $\mathrm{Re}(s)>0$. Moreover, the last series
   in~\eqref{eq:dir log2} has an analytic continuation to a slit plane. This is proved by the same argument
   as in Lemma~\ref{le:dirichlet}, using the Euler-Maclaurin formula
   and~\eqref{eq:subs}. Moreover, the polynomial estimate from that lemma
   easily extends to the continuation of~\eqref{eq:dir log} for $\mathrm{Re}(s)>0,$
   $s\notin(0,1]$.   
   After these preparations we can prove~\eqref{eq:F as} by Mellin transform asymptotics.
   We compute, recalling the definition of~$\zeta_{\eta,\theta}$ in~\eqref{eq:dirichlet}
   and its asymptotics from Lemma~\ref{le:dirichlet},
   \begin{align}
     \Big(s \sum_{n=2}^\infty (n!)^{-s}\Big)^*(t) &= \sum_{n=2}^\infty \int_0^\infty (n!)^{-s}s^t ds \notag \\
     &= \Gamma(t+1) \sum_{n=2}^\infty (\log n!)^{-t-1}  \label{eq:dir used} \\
     &\sim \Gamma(t+1) \sum_{n=2}^\infty (n \log n)^{-t-1} \notag \\
     &= \Gamma(t+1)\, \zeta_{0,1}(t+1)  \notag\\
     &\sim \log \frac1t, \quad t\to0. \notag%\label{eq:as M}
  \end{align}
  We have shown above that the Dirichlet series in~\eqref{eq:dir used} has an analytic continuation
  to $\mathrm{Re}(t)>-1$, $t\notin (-1,0]$, and so Lemma~2 in~\cite{GrTh96}
  is applicable (asymptotic transfer, with $a=0$, $b=1$ in the notation of~\cite{GrTh96}).
  We conclude
  \[
  s \sum_{n=2}^\infty (n!)^{-s} \sim \Big(\log \frac1s \Big)^{-1},  \quad s\downarrow0,\ s\in\mathbb R,
  \]
  and hence
  \[
    \eta(s) \sim \frac{1}{s \log(1/s)}, \quad s\downarrow0,\ s\in\mathbb R. \qedhere
  \]
\end{proof}
Analogously to~\eqref{eq:mellin}, we find the Mellin transform of~\eqref{eq:def S!}:
\begin{align}
  S^{!\, *}_{\alpha,\beta,\mu}(s) &= 
  \int_{0}^\infty S^!_{\alpha,\beta,\mu}(r) r^{s-1}dr \notag \\
  &=  \frac{ \Gamma(\mu+1-s/2) \Gamma(s/2)}{2\Gamma(\mu+1)}
    \sum_{n=0}^\infty (n!)^\alpha (n!)^{\beta(s/2-(\mu+1))} \notag \\
  &= \frac{ \Gamma(\mu+1-s/2) \Gamma(s/2)}{2\Gamma(\mu+1)}
    \eta\big(\tfrac12 \beta(\tilde{s}-s)\big), \label{eq:inv! explicit}
\end{align}
where
\begin{equation}\label{eq:s tilde}
  \tilde s :=  2(\mu+1 -\alpha/\beta)>0.
\end{equation}
By the Mellin inversion formula, we have
\begin{equation}\label{eq:inv!}
  S^!_{\alpha,\beta,\mu}(r) = \frac{1}{2\pi i} \int_{\sigma-i\infty}^{\sigma+i\infty}
    r^{-s} S^{!\,  *}_{\alpha,\beta,\mu}(s) ds,
    \quad 0<\sigma<\tilde s.
\end{equation}
Note that integrability of the Mellin transform $S^{!\,  *}_{\alpha,\beta,\mu}$ follows
from~\eqref{eq:stir im} and the obvious estimate
\begin{equation}\label{eq:abs}
  |\eta(s)| \leq \eta\big(\mathrm{Re}(s)\big), \quad \mathrm{Re}(s)>0.
\end{equation}
By~\eqref{eq:inv! explicit} and Lemma~\ref{le:dirichlet!}, the integrand
in~\eqref{eq:inv!} has a singularity at $s=\tilde s$, with singular expansion
\begin{equation}\label{eq:sing}
  \log\Big( r^{-s} S^{!\, *}_{\alpha,\beta,\mu}(s) \Big)
  =-s\log r + \log\frac{1}{\tilde{s}-s} - \log \log \frac{1}{\tilde{s}-s} + O(1).
\end{equation}
It is well known that this kind of singularity (polynomial growth
of the transform) is \emph{not} amenable to the saddle
point method, as regards precise asymptotics. Still, a \emph{saddle point bound} can be readily
found.
For an introduction to saddle point bounds and the saddle point method, we recommend
Chapter~VIII in~\cite{FlSe09}. Retaining only the first two
terms on the right-hand side of~\eqref{eq:sing} and taking the derivative w.r.t.~$s$ yields
the saddle point equation
\[
  \log r= \frac{1}{\tilde{s}-s}, %+ \frac{1}{\tilde{s}-s} \Big(\log \frac{1}{\tilde{s}-s}\Big)^{-1}.
\]
with solution
\begin{equation}\label{eq:def sigma}
  \sigma_r := \tilde{s} - \frac{1}{\log r}.
\end{equation}
We take this as real part of the integration path in~\eqref{eq:inv!} and obtain,
using~\eqref{eq:abs},
\begin{align}\label{eq:bd S}
  |S^!_{\alpha,\beta,\mu}(r)| &\leq
  r^{-\sigma_r}   \eta\big(\tfrac12 \beta(\tilde{s}-\sigma_r)\big)
   \frac{1}{2\pi } \int_{-\infty}^{\infty}
   \frac{ |\Gamma(\mu+1-s/2) \Gamma(s/2)|}{2\Gamma(\mu+1)}\Big|_{s=\sigma_r+iy}  dy \notag \\
   &= O \Big(r^{-\sigma_r}   \eta\big(\tfrac12 \beta(\tilde{s}-\sigma_r)\big) \Big).
\end{align}
The fact that the integral is $O(1)$ as $r\uparrow \infty$
follows from~\eqref{eq:stir im}. From~\eqref{eq:def sigma}, we have
\begin{equation}\label{eq:bd r}
  r^{-\sigma_r} = e r^{-\tilde{s}}.
\end{equation}
Lemma~\ref{le:dirichlet!} implies
\begin{equation} \label{eq:bd F}
  \eta\big(\tfrac12 \beta(\tilde{s}-\sigma_r)\big)
  =\eta\Big(\frac{\beta }{2\log r}\Big) 
  \sim \frac{2\log r}{\beta \log \log r},
\end{equation}
which results in the saddle point bound
\begin{equation}\label{eq:sp bd}
 S^!_{\alpha,\beta,\mu}(r) =O\Big( r^{-2(\mu+1-\alpha/\beta)}
       \frac{\log r}{\log \log r}\Big), 
     \quad r\uparrow \infty,
\end{equation}
which proves Theorem~\ref{thm:sp bd}.
Note that this bound is weaker than~\eqref{eq:! bd}, but does not require
the -- so far not proven -- expansion~\eqref{eq:G inv} of the inverse gamma function.
The saddle point bound~\eqref{eq:sp bd} also holds for $\alpha=0$, which is excluded in Theorems~\ref{thm:!}
and~\ref{thm:! bd}, because our proof of~\eqref{eq:right tail} below
requires $\alpha>0$.

\section{Factorial sequences: Proofs}\label{se:! proof}

This section contains the proofs of Theorems~\ref{thm:!} and~\ref{thm:! bd}.
Our estimates can be viewed as a somewhat degenerate instance of the Laplace method,
where the central part of the sum consists of just two summands.
We denote by~$A_n$ the summands of~\eqref{eq:def S!}:
\[
  S^!_{\alpha,\beta,\mu}(r) = \sum_{n=0}^\infty A_n,
  \qquad
  A_n := \frac{(n!)^\alpha}{\big((n!)^\beta +r^2\big)^{\mu+1}}.
\]
Define $n_0 = n_0(r)$ by $n_0(r):=\lfloor\Gamma^{-1}(r^{2/\beta})\rfloor-1$, i.e.,
\begin{equation}\label{eq:n0}
  (n_0!)^\beta \leq r^{2} < (n_0+1)!^\beta.
\end{equation}
We first show that $S^!_{\alpha,\beta,\mu}(r)$ is dominated by $A_{n_0}$ and
$A_{n_0+1}$.
For brevity, we omit writing the dependence of $A_n$ and $n_0$ on~$r$.
\begin{lemma}\label{le:A}
  Let $\alpha,\beta>0,$ $\mu\geq0,$ with $\alpha-\beta(\mu+1)<0$. Then
  \begin{equation}\label{eq:le A}
    S^!_{\alpha,\beta,\mu}(r) \sim A_{n_0} + A_{n_0+1}, \quad r\uparrow \infty.
  \end{equation}
\end{lemma}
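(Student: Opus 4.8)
The plan is to split the sum at the peak index $n_0$, writing $S^!_{\alpha,\beta,\mu}(r)=\big(\sum_{n=0}^{n_0-1}A_n\big)+A_{n_0}+A_{n_0+1}+\sum_{n=n_0+2}^\infty A_n$, and to show that the two flanking sums are negligible: $\sum_{n=0}^{n_0-1}A_n=o(A_{n_0})$ and $\sum_{n=n_0+2}^\infty A_n=o(A_{n_0+1})$. Since every $A_n>0$, this immediately yields $S^!_{\alpha,\beta,\mu}(r)=\big(A_{n_0}+A_{n_0+1}\big)(1+o(1))$. Throughout, $r\uparrow\infty$, so by~\eqref{eq:n0} (cf.\ also~\eqref{eq:Gamma inv}) we have $n_0=n_0(r)\uparrow\infty$, which is exactly what makes the tails small.

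For the left tail I would first observe that $n\le n_0$ forces $(n!)^\beta\le(n_0!)^\beta\le r^2$ by~\eqref{eq:n0}, hence $r^{2(\mu+1)}\le\big((n!)^\beta+r^2\big)^{\mu+1}\le(2r^2)^{\mu+1}$; this gives $A_n\le(n!)^\alpha r^{-2(\mu+1)}$ for $n\le n_0$ and, simultaneously, $A_{n_0}\ge 2^{-(\mu+1)}(n_0!)^\alpha r^{-2(\mu+1)}$. Since $(n!)^\alpha/\big((n+1)!\big)^\alpha=(n+1)^{-\alpha}\le 2^{-\alpha}<1$ for $n\ge1$, the increasing sequence $\big((n!)^\alpha\big)_{n\ge1}$ has partial sums dominated by a constant times the largest term, so $\sum_{n=0}^{n_0-1}(n!)^\alpha\lesssim\big((n_0-1)!\big)^\alpha$. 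Dividing the two bounds, $\sum_{n=0}^{n_0-1}A_n/A_{n_0}\lesssim\big((n_0-1)!/n_0!\big)^\alpha=n_0^{-\alpha}\to0$. This is the only place where $\alpha>0$ enters, which also explains why $\alpha=0$ must be excluded here (and in Theorems~\ref{thm:!} and~\ref{thm:! bd}).

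For the right tail the key object is the ratio of consecutive summands,
\[
  \frac{A_{n+1}}{A_n}=(n+1)^\alpha\left(\frac{(n!)^\beta+r^2}{\big((n+1)!\big)^\beta+r^2}\right)^{\mu+1}.
\]
For $n\ge n_0+1$ one has $(n!)^\beta\ge\big((n_0+1)!\big)^\beta>r^2$ by~\eqref{eq:n0}, so the bracket is at most $2(n!)^\beta/\big((n!)^\beta(n+1)^\beta\big)=2(n+1)^{-\beta}$ and hence $A_{n+1}/A_n\le 2^{\mu+1}(n+1)^{\alpha-\beta(\mu+1)}$. Since $\alpha-\beta(\mu+1)<0$ and $n_0\uparrow\infty$, the quantity $q_r:=2^{\mu+1}(n_0+2)^{\alpha-\beta(\mu+1)}$ tends to $0$, and $A_{n+1}\le q_rA_n$ for every $n\ge n_0+1$; summing the geometric majorant gives $\sum_{n=n_0+2}^\infty A_n\le A_{n_0+1}\sum_{k\ge1}q_r^k=O(q_r)\,A_{n_0+1}=o(A_{n_0+1})$. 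Combining this with the left-tail estimate and the decomposition above proves~\eqref{eq:le A}.

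The computation is elementary, so there is no deep obstacle; the point that needs a little care is that the right-tail bound must be $o(A_{n_0+1})$ rather than merely $O(A_{n_0+1})$ — a plain geometric-decay argument only gives the latter, and one has to notice that the very first tail ratio $A_{n_0+2}/A_{n_0+1}$ already vanishes in the limit, which then drags the whole tail below $A_{n_0+1}$. It is also worth keeping in mind that the lemma does \emph{not} assert that $A_{n_0}$ and $A_{n_0+1}$ are of the same order: either can dominate, depending on where $\Gamma^{-1}(r^{2/\beta})$ sits relative to the integers, but this is irrelevant for the present statement.
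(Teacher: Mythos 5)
Your proof is correct, and its overall strategy coincides with the paper's: isolate $A_{n_0}$ and $A_{n_0+1}$ and show both flanking tails are negligible, with the right tail handled by essentially the same estimate (the bound $A_{n+1}/A_n\le 2^{\mu+1}(n+1)^{\alpha-\beta(\mu+1)}$ for $n\ge n_0+1$, summed as a geometric series whose ratio tends to $0$, is just a consecutive-ratio reformulation of the paper's bound on $A_{n_0+k}/A_{n_0+1}$). Where you genuinely differ is the left tail. The paper compares each term to $A_{n_0}$ via the ratios $B_k=\big(n_0(n_0-1)\cdots(n_0-k+1)\big)^{-\alpha}$, and since $\sum_k B_k$ is not obviously small when $\alpha$ is small, it needs the auxiliary device of fixing an integer $q>1/\alpha$, treating $k\le q$ separately and extracting $(n_0-k+1)^{-\alpha q}$ for $k>q$. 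You instead observe that for all $n\le n_0$ the denominators are pinched between $r^{2(\mu+1)}$ and $(2r^2)^{\mu+1}$ by \eqref{eq:n0}, so the left tail reduces to $\sum_{n<n_0}(n!)^\alpha$, which is dominated by a constant (depending only on $\alpha$) times its last term because consecutive ratios going down are $\le 2^{-\alpha}<1$; comparing $\big((n_0-1)!\big)^\alpha$ with $(n_0!)^\alpha$ then yields the decisive factor $n_0^{-\alpha}\to0$. This is slightly shorter and avoids the $q>1/\alpha$ splitting, while giving the same quantitative conclusion (both arguments show the left tail is $O(n_0^{-\alpha})A_{n_0}$, and both make clear that $\alpha>0$ is exactly what is used, matching the paper's remark about why $\alpha=0$ is excluded via \eqref{eq:right tail}). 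One small aside: your closing caveat that geometric decay alone gives only $O(A_{n_0+1})$ is not really a danger in either version, since the geometric ratio itself tends to $0$, which already upgrades the bound to $o(A_{n_0+1})$.
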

\begin{proof}
  For $k\geq2$, we estimate, using~\eqref{eq:n0},
  \begin{align*}
    A_{n_0+k}/A_{n_0+1} &= \big((n_0+2)\dots(n_0+k)\big)^\alpha
    \bigg(
      \frac{(n_0+1)!^\beta + r^2}{(n_0+k)!^\beta + r^2}
    \bigg)^{\mu+1} \\
  &\leq \big((n_0+2)\dots(n_0+k)\big)^\alpha
     \bigg(
      \frac{2(n_0+1)!^\beta}{(n_0+k)!^\beta}
    \bigg)^{\mu+1} \\
  &= 2^{\mu+1} \big((n_0+2)\dots(n_0+k)\big)^{\alpha-\beta(\mu+1)}.
  \end{align*}
  Therefore,
  \begin{align*}
    A_{n_0+1}^{-1} \sum_{k=2}^\infty A_{n_0+k}
    &\leq 2^{\mu+1} \sum_{k=2}^\infty \big((n_0+2)\dots(n_0+k)\big)^{\alpha-\beta(\mu+1)} \\
    &\leq 2^{\mu+1} \sum_{k=2}^\infty n_0^{(k-1)(\alpha-\beta(\mu+1))} \\
    &\sim   2^{\mu+1} n_0^{\alpha-\beta(\mu+1)} = o(1).
  \end{align*}
  This shows that
  \[
    \sum_{k=2}^\infty A_{n_0+k} \ll A_{n_0+1}.
  \]
  For the initial segment $\sum_{k=1}^{n_0-1} A_{n_0-k}$
  of the series, we use the following estimate for $k\geq1$:
  \begin{align*}
    A_{n_0-k}/A_{n_0} &= \big(n_0(n_0-1)\dots(n_0-k+1)\big)^{-\alpha}
    \bigg(
      \frac{(n_0!)^\beta + r^2}{(n_0-k)!^\beta + r^2}
    \bigg)^{\mu+1} \\
   &\leq \big(n_0(n_0-1)\dots(n_0-k+1)\big)^{-\alpha}
    \bigg(
      \frac{2r^2}{ r^2}
    \bigg)^{\mu+1} \\
   &= 2^{\mu+1} \big(n_0(n_0-1)\dots(n_0-k+1)\big)^{-\alpha} =: 2^{\mu+1} B_k.
   \end{align*}
   Pick an integer $q$ with $q>1/\alpha$. Then
   \begin{equation}\label{eq:B sum}
     \sum_{k=1}^{n_0} B_k
     = \sum_{k=1}^{q}  B_k + \sum_{k=q+1}^{n_0}  B_k.
   \end{equation}
   Now $\sum_{k=1}^{q}  B_k$ has a fixed number of summands, all $o(1)$, and is
   thus $o(1)$ as $r\uparrow \infty$. In the second sum, we pull out the
   factor $n_0^{-\alpha}$, estimate $q$ of the remaining factors by $n_0-k+1$,
   and the other factors by~$1$:
   \begin{align*}
     \sum_{k=q+1}^{n_0}  B_k
     &\leq n_0^{-\alpha} \sum_{k=q+1}^{n_0} (n_0-k+1)^{-\alpha q} \\
     &\leq n_0^{-\alpha} \sum_{k=1}^{n_0} k^{-\alpha q} = O(n_0^{-\alpha}).
   \end{align*}
   The last equality follows from $q>1/\alpha$.
   We conclude that~\eqref{eq:B sum} is $o(1)$, and thus
   \begin{equation}\label{eq:right tail}
    \sum_{k=1}^{n_0-1} A_{n_0-k} \ll A_{n_0},
   \end{equation}
   which finishes the proof.
\end{proof}
We now evaluate $A_{n_0}$ and  $A_{n_0+1}$ asymptotically. We use the following notation,
partially in line with p.417f.\ of~\cite{BoCo18}, where asymptotic inversion
of the gamma function is discussed. We write $W(\cdot)$ for the Lambert $W$
function, which satisfies $W(z)\exp(W(z))=z$.
\begin{align}
  x &:= r^{2/\beta}, \quad v:= x/\sqrt{2\pi}, \notag \\
  g &:= \Gamma^{-1}(x), \notag \\
  n_0 &= \lfloor g \rfloor -1 = g - \{ g \}-1, \label{eq:n0 g}\\
  w &:= W\big((\log v)/e\big), \notag \\
  u_0 &:= (\log v)/w. \label{eq:u0 def}
\end{align}
It is easy to check, using the defining property of Lambert~$W$, that
\begin{equation}\label{eq:u0}
  u_0 \log u_0 - u_0 = \log v;
\end{equation}
in fact, this is equation~(63) in~\cite{BoCo18}.
\begin{proof}[Proof of Theorem~\ref{thm:!}]
By Stirling's formula and~\eqref{eq:n0 g}, we have
\begin{align}
  \log n_0! &= n_0 \log n_0 - n_0 + \tfrac12 \log n_0 + O(1) \notag \\
  &= (g - \{ g \}-1)\big(\log g + O(1/g)\big) - g  +\tfrac12 \log g + O(1) \notag \\
  &= g \log g - g -(\tfrac12 + \{g\}) \log g + O(1). \label{eq:log n0!}
\end{align}
As mentioned in Theorems~\ref{thm:!} and~\ref{thm:! bd}, we require the expansion
\begin{equation}\label{eq:G inv}
  \Gamma^{-1}(x) = u_0 + \frac12 + O\Big(\frac{1}{u_0 w}\Big)
\end{equation}
of the inverse gamma function; see equation~(70) in~\cite{BoCo18} (stated there, with
an additional term, but  without proof).
Note that first order asymptotics $\Gamma^{-1}(x) \sim u_0$,
i.e.~\eqref{eq:Gamma inv}, are very easy to
prove using the approach of~\cite{BoCo18}, just by carrying the $O(1/u)$ term neglected
after equation~(62) in~\cite{BoCo18} a few lines further.
{}From~\eqref{eq:log n0!} and~\eqref{eq:G inv}, we obtain
\begin{align*}
  \log n_0! &= u_0 \log u_0 - u_0 + \tfrac12 \log u_0 -(\tfrac12 + \{g\}) \log u_0 + O(1) \\
  &= u_0 \log u_0 - u_0 - \{g\} \log u_0 + O(1).
\end{align*}
Together with~\eqref{eq:u0}, this yields
\begin{align}
  n_0! &= v \exp\big({- \{g\}} \log u_0 + O(1)\big) \notag \\
  &= r^{2/\beta}\exp\big({- \{g\}} \log u_0 + O(1)\big). \label{eq:n0!}
\end{align}
Equation~\eqref{eq:n0!} is crucial for determining the asymptotics of
the right hand side of~\eqref{eq:le A}. Since
\[
   \exp\big({- \beta\{g\}} \log u_0 + O(1)\big) +1= e^{O(1)},
\]
we can use~\eqref{eq:n0!} to evaluate the summand $A_{n_0}$ as
\begin{align}
  A_{n_0} &= \frac{(n_0!)^\alpha}{\big((n_0!)^\beta +r^2\big)^{\mu+1}} \notag \\
  &= r^{2\alpha/\beta - 2(\mu+1)}
  \exp\big({- \alpha\{g\}} \log u_0 + O(1)\big) \notag\\
  &= r^{2\alpha/\beta - 2(\mu+1)}
  \exp\big({- \alpha\{g\}} \log \log r + O(\log \log \log r)\big). \label{eq:A0}
\end{align}
In the last line, we used the fact that
\begin{equation}\label{eq:W}
  W(z) \sim \log z, \quad z\uparrow\infty,
\end{equation}
see~\cite{CoGoHaJeKn96}.
The definition of~$n_0$ (see~\eqref{eq:n0}), \eqref{eq:G inv}, and~\eqref{eq:W}
imply
\[
  n_0 \sim \frac{2 \log r}{\beta \log \log r}, \quad r\uparrow \infty.
\]
As for the summand $A_{n_0+1}$, we thus have (with $\log^3 = \log \log \log$)
\begin{align}
  A_{n_0+1} &= \frac{(n_0+1)!^\alpha}{\big((n_0+1)!^\beta +r^2\big)^{\mu+1}}\notag \\
  &=  \frac{(\log r)^\alpha e^{O(\log^3 r)} (n_0!)^\alpha}{\big((\log r/\log \log r)^\beta
   e^{O(1)} (n_0!)^\beta +r^2\big)^{\mu+1}}\notag\\
 &= (\log r)^\alpha r^{2\alpha/\beta}\exp\big({- \alpha\{g\}} \log u_0 + O(\log^3 r)\big) \notag\\
 & \qquad \times \bigg(\Big(\frac{\log r}{\log \log r}\Big)^\beta r^2
   \exp\big({- \beta\{g\}} \log u_0 + O(1)\big) +r^2\bigg)^{-(\mu+1)} \notag\\
 &= r^{2\alpha/\beta - 2(\mu+1)} \exp\big(\alpha(1-\{g\}) \log \log r + O(\log^3 r)\big) \notag \\
 & \qquad \times \bigg(\Big(\frac{\log r}{\log \log r}\Big)^\beta 
   \exp\big({- \beta\{g\}} \log u_0 + O(1)\big) +1\bigg)^{-(\mu+1)}. \label{eq:A1 comp} 
\end{align}
This holds as $r\to\infty$, without any constraints on~$r$.
If $\{g\}\leq d_2 < 1$, as assumed in Theorem~\ref{thm:!},
then the term inside the big parentheses in~\eqref{eq:A1 comp}  goes to infinity;
note that $\log u_0 \sim \log \log r$ by~\eqref{eq:u0 def} and~\eqref{eq:W}.
We then have
\begin{equation}\label{eq:A1}
  A_{n_0+1} = r^{2\alpha/\beta - 2(\mu+1)}
    \exp\big((\alpha-\beta(\mu+1))(1-\{g\}) \log \log r + O(\log^3 r)\big).
\end{equation}
Define
\[
  \mathcal{R}_0 := \Big\{ r \in\mathcal{R} : -\alpha\{g\}  \geq
    (\alpha-\beta(\mu+1))(1-\{g \}) \Big\}
\]
and
\[
  \mathcal{R}_1 := \mathcal{R} \setminus \mathcal{R}_0.
\]
Then, by Lemma~\ref{le:A}, \eqref{eq:A0}, and~\eqref{eq:A1}, we obtain
\begin{align}
  S^!_{\alpha,\beta,\mu}(r)  &\sim A_{n_0}, \quad r\in  \mathcal{R}_0, \label{eq:S A0} \\
  S^!_{\alpha,\beta,\mu}(r)  &\sim A_{n_0+1}, \quad r\in  \mathcal{R}_1. \label{eq:S A1}
\end{align}
Theorem~\ref{thm:!} now follows from this,
\eqref{eq:A0}, and~\eqref{eq:A1}. Note that the assumption $0<d_1\leq \{ g\}\leq d_2<1$
of Theorem~\ref{thm:!}
ensures that the term $(\dots)\log \log r$ in~\eqref{eq:A0} and~\eqref{eq:A1} asymptotically
dominates the error term. Moreover, the asymptotic equivalence in~\eqref{eq:S A0}
and~\eqref{eq:S A1} can be replaced by an equality, because the error factor $1+o(1)$
is absorbed into the $O(\log^3 r)$ in the exponent.
\end{proof}
\begin{proof}[Proof of Theorem~\ref{thm:! bd}]
By~\eqref{eq:A0}, we have
\[
  A_{n_0} \leq r^{2\alpha/\beta - 2(\mu+1)} \exp\big( O(\log \log \log r)\big),
\]
and so, by Lemma~\ref{le:A}, it suffices to estimate~$A_{n_0+1}$. 
Fix an arbitrary $\varepsilon>0$. Recall the notation
introduced around~\eqref{eq:n0 g}. If $r$ is such that $\alpha(1-\{g\}) \leq \varepsilon$,
then we simply estimate the term in big parentheses in~\eqref{eq:A1 comp} by~$1$,
and obtain
\[
  A_{n_0+1} \leq r^{2\alpha/\beta - 2(\mu+1)} \exp\big(
  \varepsilon \log \log r + O(\log \log \log r)\big).
\]
If, on the other hand, $\{g\} < 1- \varepsilon/\alpha$, then~\eqref{eq:A1} holds,
which implies
\[
  A_{n_0+1} \leq r^{2\alpha/\beta - 2(\mu+1)} \exp\big( O(\log \log \log r)\big),
\]
because the quantity in front of $\log \log r$ in~\eqref{eq:A1} is negative.
We have thus shown that, for any $\varepsilon>0$,
\begin{equation}\label{eq:S bound}
   S^!_{\alpha,\beta,\mu}(r) \leq r^{-2(\mu+1-\alpha/\beta)}
       \exp\big( \varepsilon \log \log r + O(\log \log \log r) \big)
\end{equation}
From this, Theorem~\ref{thm:! bd} easily follows. Indeed, were it not true,
then there would be $\varepsilon'>0$ and a sequence $r_n\uparrow\infty$ such that
\[
  \log\big(r_n^{2(\mu+1-\alpha/\beta)}S^!_{\alpha,\beta,\mu}(r_n)\big)
  \geq  2\varepsilon' \log \log r_n,
\]
contradicting~\eqref{eq:S bound}.
\end{proof}

\section{Power sequences: Full expansion in a special case}

In~\cite{SrMeTo18}, an integral representation of the generalized Mathieu series
\[
  S_{\mu}(r) := \sum_{n=1}^\infty \frac{2n}{(n^2+r^2)^{\mu+1}}, \quad \mu>\tfrac32,r>0,
\]
was derived. In our notation, this series is 
\[
  S_{\mu}(r) = 2 S_{1,2,0,0,\mu}(r) + \frac{2}{(1+r^2)^{\mu+1}}.
\]
We use said integral representation and Watson's lemma
to find a full expansion of $S_{\mu}(r)$ as $r\to\infty$.
This expansion is not new (see Theorem~1 in~\cite{Pa13}), and so we do not give full details.
Still, our approach provides an independent check for (a special case of) Theorem~1 in~\cite{Pa13},
and it might be useful for other Mathieu-type series admitting a representation
as a Laplace transform.
The integral representation in Theorem~4 of~\cite{SrMeTo18} is
\begin{equation}\label{eq:S int}
  S_{\mu}(r) = c_\mu \int_0^\infty e^{-rt} t^{\mu+1/2}g_\mu(t) dt,
\end{equation}
where
\[
  c_\mu := \frac{\sqrt{\pi}}{2^{\mu-1/2}\Gamma(\mu+1)},
\]
and $g_\mu$ is the Schl\"omilch series
\[
  g_\mu(t) := \sum_{n=1}^\infty n^{1/2-\mu} J_{\mu+1/2}(nt).
\]
For $\mathrm{Re}(s) > \tfrac32-\mu$, the Mellin transform of $g_\mu$ is
\begin{align*}
  g_\mu^*(s) &= \sum_{n=1}^\infty n^{1/2-\mu-s}2^{s-1}
    \frac{\Gamma(\mu/2+1/4+s/2)}{\Gamma(\mu/2+5/4-s/2)} \\
  &= \frac{2^{s-1}\zeta(s+\mu-1/2)\Gamma(\mu/2+1/4+s/2)}{\Gamma(\mu/2+5/4-s/2)}.
\end{align*}
The factor $\zeta(s+\mu-1/2)$ has a pole at $\check{s} := \tfrac32-\mu$, and
$\Gamma(\mu/2+1/4+s/2)$ has poles at $s_k:= -2k-\mu-\tfrac12$, $k\in\mathbb{N}_0$.
By using Mellin inversion and collecting residues, we find
that the expansion of  $g_\mu(t)$ as $t\downarrow0$ is
\begin{align*}
  g_\mu(t) &\sim \frac{2^{\check{s}-1}\Gamma(\mu/2+1/4+\check{s}/2)}
    {\Gamma(\mu/2+5/4-\check{s}/2)} t^{-\check{s}}
  + \sum_{k=0}^\infty \frac{(-1)^k2^{s_k} \zeta(s_k+\mu-1/2)}
    {k!\, \Gamma(\mu/2+5/4-\check{s}/2)} t^{-s_k} \\
  &= \frac{2^{1/2-\mu}}{\Gamma(\mu+1/2)}t^{\mu-3/2}
  +\sum_{k=0}^\infty \frac{(-1)^k 2^{-2k-\mu-1/2} \zeta(-2k-1)}
    {k!\, \Gamma(k+\mu+3/2)} t^{2k+\mu+1/2}.
\end{align*}
No we multiply this expansion by $t^{\mu+1/2}$ and use Watson's lemma (\cite{Ol74}, p.71)
in~\eqref{eq:S int}. In the notation of \cite{Ol74}, p.71, the parameters~$\mu$ and~$\lambda$
are $\tfrac12$ and our~$\mu$, respectively.
Simplifying the resulting expansion using Legendre's duplication formula,
\[
  \Gamma(2k+2\mu+2) =\pi^{-1/2} 2^{2k+2\mu+1} \Gamma(k+\mu+1)\Gamma(k+\mu+3/2),
\]
yields the expansion
\begin{equation}\label{eq:expans}
  S_\mu(r) \sim \frac{1}{\mu} r^{-2\mu} + \sum_{k=0}^\infty
    \frac{2(-1)^k \zeta(-2k-1)\Gamma(k+\mu+1)}{\Gamma(\mu+1)k!}
    r^{-2k-2\mu-2}
\end{equation}
as $r\to\infty$. Recall that the values of the zeta function at negative odd integers
can be represented by Bernoulli numbers:
\[
  \zeta(-2k-1) = -\frac{B_{2k+2}}{2k+2}, \quad k\in\mathbb{N}_0.
\]
The expansion~\eqref{eq:expans} indeed agrees with  Theorem~1 in~\cite{Pa13}, and the first term
agrees with our Theorem~\ref{thm:main} (with $\alpha=1$, $\beta=2$, $\gamma=\delta=0$).
The divergent series in~\eqref{eq:expans} looks very similar to
formula~(3.2) in~\cite{Sr88}, but there the argument of $\zeta(\cdot)$ in the summation is
eventually positive instead of negative.

Finally, we give an amusing non-rigorous derivation of the asymptotic series on
the right-hand side of~\eqref{eq:expans},
by using the binomial theorem, the ``formula'' $\zeta(-2k-1)=\sum_{n=1}^\infty n^{2k+1},$
and interchanging summation:
\begin{align*}
  S_\mu(r) &= 2 r^{-2(\mu+1)} \sum_{n=1}^\infty n \Big(1+\frac{n^2}{r^2}\Big)^{-(\mu+1)} \\
  &=  2 r^{-2(\mu+1)} \sum_{n=1}^\infty n
    \sum_{k=0}^\infty(-1)^k\binom{k+\mu}{k} \Big(\frac{n}{r}\Big)^{2k} \\
  ``\!&=\!" \ \ 2 r^{-2(\mu+1)}  \sum_{k=0}^\infty (-1)^k\binom{k+\mu}{k}
    r^{-2k} \zeta(-2k-1) \\
  &= \sum_{k=0}^\infty
    \frac{2(-1)^k \zeta(-2k-1)\Gamma(k+\mu+1)}{\Gamma(\mu+1)k!}
    r^{-2k-2\mu-2}.
\end{align*}
Note that the dominating term of order $r^{-2\mu}$ is not found by this heuristic.

\section{Application and further comments}

We now apply Theorem~\ref{thm:main2} (on power-logarithmic sequences)
to an example taken from~\cite{To10}. There,
integral representations for some Mathieu-type series were deduced, and we  state
asymptotics for one of them.
\begin{corollary}\label{cor:log}
  Let $\alpha,\beta>0,$ $\mu\geq0,$ with $\alpha-\beta(\mu+1)<-1$. Then
  \[
    \sum_{n=2}^\infty \frac{(\log n!)^\alpha}{\big((\log n!)^\beta +r^2\big)^{\mu+1}}
      \sim C\, r^{2(\alpha+1)/\beta-2(\mu+1)}/\log r,\quad r\uparrow \infty,
  \]
  with
  \[
    C = \frac{\Gamma\big({-\frac{\alpha+1}{\beta}}+\mu+1 \big)
      \Gamma\big( \frac{\alpha+1}{\beta} \big)}{2\Gamma(\mu+1)}.
  \]
\end{corollary}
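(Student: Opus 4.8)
The plan is to recognize Corollary~\ref{cor:log} as a direct specialization of Theorem~\ref{thm:main2}. Set $a_n := (\log n!)^\alpha$ and $b_n := (\log n!)^\beta$. By Stirling's formula, $\log n! = n\log n - n + O(\log n)$, so $\log n! \sim n\log n$ as $n\uparrow\infty$, and hence
\[
  a_n \sim (n\log n)^\alpha = n^\alpha (\log n)^\alpha, \qquad
  b_n \sim (n\log n)^\beta = n^\beta (\log n)^\beta .
\]
Thus the hypotheses of Theorem~\ref{thm:main2} are satisfied with the parameter choice $\gamma = \alpha$ and $\delta = \beta$ (keeping the given $\alpha,\beta,\mu$), and the constraint $\alpha-\beta(\mu+1)<-1$ assumed in the corollary is precisely the one required there.

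Next I would read off the exponents from~\eqref{eq:main}. The power of $r$ is $2(\alpha+1)/\beta - 2(\mu+1)$, exactly as claimed. The power of $\log r$ is $-\delta(\alpha+1)/\beta + \gamma = -(\alpha+1) + \alpha = -1$, which matches the factor $1/\log r$ in the statement. It then remains to identify the constant. With $\gamma=\alpha$, $\delta=\beta$ one computes $\delta(\alpha+1)/\beta - \gamma = (\alpha+1) - \alpha = 1$, a positive integer, so we fall on the branch~\eqref{def:C int} with $m=1$. Substituting $m=1$ into~\eqref{def:C int} gives
\[
  C_{\alpha,\beta,\alpha,\beta,\mu}
  = \frac{\beta^{0}\,\Gamma\big({-\tfrac{\alpha+1}{\beta}}+\mu+1\big)\,\Gamma\big(\tfrac{\alpha+1}{\beta}\big)}
         {2^{1}\,\Gamma(\mu+1)}
  = \frac{\Gamma\big({-\tfrac{\alpha+1}{\beta}}+\mu+1\big)\,\Gamma\big(\tfrac{\alpha+1}{\beta}\big)}
         {2\,\Gamma(\mu+1)},
\]
which is exactly the constant $C$ of the corollary, completing the proof.

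Since the whole argument is a specialization of Theorem~\ref{thm:main2}, there is no genuine obstacle; the only point deserving (minimal) attention is to notice that this example lands precisely on the ``integer'' branch, i.e.\ $\delta(\alpha+1)/\beta - \gamma = 1 \in \mathbb{N}$, so that~\eqref{def:C int} must be used rather than the generic formula. This is in fact a reassuring consistency check: the generic expression for $C_{\alpha,\beta,\gamma,\delta,\mu}$ contains the factor $\Gamma\big({-\delta(\alpha+1)/\beta}+\gamma+1\big)^{-1} = \Gamma(0)^{-1} = 0$ in this degenerate case, so only~\eqref{def:C int} yields a finite, correct value.
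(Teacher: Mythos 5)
Your proof is correct and is essentially identical to the paper's own argument: Stirling gives $\log n! \sim n\log n$, Theorem~\ref{thm:main2} applies with $\gamma=\alpha$, $\delta=\beta$, and since $m=\delta(\alpha+1)/\beta-\gamma=1\in\mathbb{N}$ the constant comes from~\eqref{def:C int}, yielding exactly the stated $C$ and the factor $(\log r)^{-1}$. Your closing remark that the generic constant would vanish here (because of the $\Gamma(0)^{-1}$ factor) is a nice consistency check, though not needed for the proof.
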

\begin{proof}
  By Stirling's formula, we have $(\log n!)^\alpha \sim (n \log n)^\alpha$.
  The statement thus follows from Theorem~\ref{thm:main2}, with $\gamma=\alpha,$
  $\delta=\beta,$ and
  $m=\delta (\alpha+1)/\beta -\gamma=1 \in \mathbb N.$
\end{proof}
A natural generalization of our main results on power-logarithmic sequences
(Theorems~\ref{thm:main} and~\ref{thm:main2})
would be to replace $\log$ by
an arbitrary slowly varying function: $a_n= n^\alpha \ell_1(n)$, $b_n= n^\beta \ell_2(n)$.
Then the Dirichlet series~\eqref{eq:D} becomes
\begin{align*}
  D(s) &= \sum_{n=2}^\infty a_n b_n^{s/2-(\mu+1)} \\
  &=  \sum_{n=2}^\infty n^{\beta s/2+\alpha-\beta(\mu+1)}
    \ell_1(n) \ell_2(n)^{s/2-(\mu+1)}. 
\end{align*}
The dominating singularity is still~$\hat s$ defined in~\eqref{eq:hat s}, as follows
from Proposition~1.3.6 in~\cite{BiGoTe87}, but it
seems not easy to determine the singular behavior of~$D$ at~$\hat s$
for generic $\ell_1,\ell_2$. Still, for specific examples such as $(\log \log n)^{\gamma}$
or $\exp(\sqrt{\log n})$, this should be doable. Note that our second step,
i.e.\ the asymptotic transfer from the Mellin transform to the original function,
works for slowly varying functions under mild conditions; see~\cite{FlOd90}.

Finally, we note that introducing a geometrically decaying factor $x^n$
to the series~\eqref{eq:def S gen} leads to a Mathieu-type \emph{power series.}
According to the following proposition, its asymptotics can be found in an elementary way,
for rather general sequences $\mathbf a,\mathbf b$.
 We refer to~\cite{ToPo11} for integral representations and further references
on certain Mathieu-type power series.

\begin{proposition}\label{prop:ps}
  Let $x\in\mathbb C$ with $|x|<1$, $a_n\in \mathbb C$, $b_n\geq 0,$ and $\mu\geq0$.
  If  $\sum_{n=0}^\infty a_n x^n$ is absolutely convergent and $b_n\uparrow \infty$,
  then
  \[
    \sum_{n=0}^\infty \frac{a_n}{\big(b_n+r^2\big)^{\mu+1}}x^n
      = r^{-2(\mu+1)} \sum_{n=0}^\infty a_n x^n + o(r^{-2(\mu+1)}), \quad r\uparrow \infty.
  \]
\end{proposition}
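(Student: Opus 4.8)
The plan is to split the series at a cutoff index depending on $r$, estimate the tail crudely, and control the head by dominated convergence. Concretely, fix a sequence $N=N(r)\uparrow\infty$ growing slowly enough that $b_N = o(r^2)$; since $b_n\uparrow\infty$ this is possible, for instance by taking $N(r)$ to be the largest index with $b_N \le r$. First I would write
\[
  \sum_{n=0}^\infty \frac{a_n x^n}{\big(b_n+r^2\big)^{\mu+1}}
  = \sum_{n=0}^{N} \frac{a_n x^n}{\big(b_n+r^2\big)^{\mu+1}}
    + \sum_{n=N+1}^\infty \frac{a_n x^n}{\big(b_n+r^2\big)^{\mu+1}}
  =: \Sigma_{\mathrm{head}} + \Sigma_{\mathrm{tail}}.
\]

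For the tail, I would simply bound $(b_n+r^2)^{-(\mu+1)} \le r^{-2(\mu+1)}$ (using $b_n\ge 0$, $\mu\ge0$), so that
\[
  |\Sigma_{\mathrm{tail}}| \le r^{-2(\mu+1)} \sum_{n=N+1}^\infty |a_n|\, |x|^n = o\big(r^{-2(\mu+1)}\big),
\]
because $N=N(r)\to\infty$ and $\sum |a_n||x|^n < \infty$ means the tail sums tend to zero.

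For the head, the point is that for $0\le n\le N$ we have $b_n \le b_N = o(r^2)$, hence uniformly in this range
\[
  \big(b_n+r^2\big)^{-(\mu+1)} = r^{-2(\mu+1)}\big(1+b_n/r^2\big)^{-(\mu+1)} = r^{-2(\mu+1)}\big(1+o(1)\big), \quad r\uparrow\infty,
\]
so that $\Sigma_{\mathrm{head}} = r^{-2(\mu+1)}\big(1+o(1)\big)\sum_{n=0}^{N} a_n x^n = r^{-2(\mu+1)}\sum_{n=0}^\infty a_n x^n + o\big(r^{-2(\mu+1)}\big)$, where in the last step the difference between the partial sum up to $N$ and the full sum is again $o(1)$ absolutely. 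Combining the two pieces gives the claim. The only mild subtlety is choosing $N(r)$ simultaneously slow enough that $b_{N(r)}=o(r^2)$ and fast enough that $N(r)\to\infty$; this is automatic once one notes $b_n\uparrow\infty$ forces $b_n < r^2$ for all $n$ up to some index tending to infinity with $r$. No step is a genuine obstacle here — the argument is entirely elementary, which is exactly the point of stating it as a contrasting remark at the end of the paper.
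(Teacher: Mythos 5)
Your proof is correct and follows essentially the same route as the paper: the paper also splits the series at the condition $b_n\leq r$ (which is exactly your cutoff $N(r)$, since $b_n\uparrow\infty$), bounds the tail by $r^{-2(\mu+1)}$ times a vanishing tail of the absolutely convergent series, and treats the head via the uniform estimate $\big(1+b_n/r^2\big)^{-(\mu+1)}=1+O(1/r)$. The only difference is cosmetic (the paper's $O(1/r)$ versus your uniform $o(1)$, and its indexing by the set $\{n: b_n\leq r\}$ rather than an explicit $N(r)$).
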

\begin{proof}
  We have
  \begin{align*}
    \Big|\sum_{n:\, b_n>r} \frac{a_n}{\big(b_n+r^2\big)^{\mu+1}}x^n\Big|
      &\leq \sum_{n:\, b_n>r} \frac{|a_n|}{\big(b_n+r^2\big)^{\mu+1}}|x|^n \\
      &\leq \sum_{n:\, b_n>r} \frac{|a_n|}{r^{2(\mu+1)}}|x|^n.
  \end{align*}
  As $\sum_{n:\, b_n>r}|a_n||x|^n$ tends to zero, this is $o(r^{-2(\mu+1)})$.
  For the dominating part of the series, we find
  \begin{align*}
    \sum_{n:\, b_n \leq r} \frac{a_n}{\big(b_n+r^2\big)^{\mu+1}}x^n
      &= r^{-2(\mu+1)} \sum_{n:\, b_n \leq r} \frac{a_n}{\big(b_n/r^2+1\big)^{\mu+1}}x^n\\
    &= r^{-2(\mu+1)} \Big( \sum_{n:\, b_n \leq r} a_n x^n +O(1/r) \Big) \\
    &= r^{-2(\mu+1)} \Big( \sum_{n=0}^\infty a_n x^n +o(1) \Big).
  \end{align*}
  In the last equality, we used that $\sum_{n:\, b_n> r} a_n x^n=o(1)$, because $b_n\uparrow \infty$.
\end{proof}
In Proposition~\ref{prop:ps}, we assumed $|x|<1$. Our main results
(Theorems~\ref{thm:main}--\ref{thm:!})
are concerned with the case $x=1$, for some special sequences $\mathbf a,\mathbf b$.
An alternating factor $(-1)^n$, on the other hand, induces cancellations that are difficult
to handle, and usually requires the availability of an explicit Mellin transform,
as in~\cite{Pa13}.

\bibliographystyle{siam}
\bibliography{literature}

\end{document}